\documentclass[english]{smfart}
\usepackage{sabbah_twisted-derham2}
\addtolength{\textheight}{\baselineskip}

\begin{document}
\frontmatter
\title{On a twisted de~Rham complex, II}

\author[C.~Sabbah]{Claude Sabbah}
\address{UMR 7640 du CNRS\\
Centre de Mathématiques Laurent Schwartz\\
École polytechnique\\
F--91128 Palaiseau cedex\\
France}
\email{sabbah@math.polytechnique.fr}
\urladdr{http://www.math.polytechnique.fr/~sabbah}

\thanks{This research was supported by the grant ANR-08-BLAN-0317-01 of the Agence nationale de la recherche.}

\begin{abstract}
We prove an algebraic formula, conjectured by M. Kontsevich, for computing the monodromy of the vanishing cycles of a regular function on a smooth complex algebraic variety.
\end{abstract}

\subjclass{32S40}

\keywords{Twisted de Rham complex, vanishing cycles, monodromy}

\maketitle

\mainmatter

\section{Introduction}\label{sec:intro}

Let $X$ be a smooth complex algebraic variety equipped with its Zariski topology and let $f:X\to\Afu$ be a function on $X$ (\ie $f\in\Gamma(X,\cO_X)$). Let $\hb$ be a new variable. We use the following notation: given any $\CC$-vector space $E$, we denote by $E\lcr\hb\rcr$ the $\CC\lcr\hb\rcr$-module of formal power series with coefficients in $E$ and by $E\lpr\hb\rpr$ the $\CC\lpr\hb\rpr$-vector space of Laurent formal series with coefficient in $E$. For a sheaf~$\cF$ on $X$, $\cF\lpr\hb\rpr$ denotes the sheaf associated to the presheaf $U\mto\cF(U)\lpr\hb\rpr$. The sheaf $\cO_X\lpr\hb\rpr$ is $\cO_X$-flat. We denote by $\wh\cE_X^{-f/u}$ the sheaf $\cO_X\lpr\hb\rpr$ equipped with the connection $d-df/u$.

Let $\cM$ be a locally free $\cO_X$-module of finite rank equipped with a flat~$\nabla$ having regular singularity at infinity (\cf \cite{Deligne70}). Then $\wh\cE_X^{-f/u}\otimes_{\cO_X}\cM$ is a locally free $\cO_X\lpr\hb\rpr$-module equipped with the connection $\nabla-df/u\otimes\id_\cM$. Note that we have $\cO_X\lpr\hb\rpr\otimes_{\cO_X}\nobreak\cM=\cM\lpr\hb\rpr$ since $\cM$ is $\cO_X$-coherent. We will consider the formal twisted de~Rham complex
\[
\DR(\wh\cE_X^{-f/u}\otimes_{\cO_X}\cM)=\big(\Omega_X^{\cbbullet}\lpr\hb\rpr\otimes_{\cO_X}\cM,\nabla-df\otimes\id_\cM/\hb\big).
\]
It comes equipped with a $\CC\lpr\hb\rpr$-connection defined by $\nabla_{\partial_\hb}=\partial_\hb+f/\hb^2$, which commutes with the differential. The hypercohomology spaces on $X$ of this complex are therefore $\CC\lpr\hb\rpr$-vector spaces with a connection $\nabla_{\partial_\hb}$.

On the other hand, let $f^\an:X^\an\to\CC$ be the associated holomorphic morphism and let $\cL=\ker\nabla^\an$ be the local system of horizontal sections of $\nabla^\an$. For each $t_o\in\CC$, let $\phi_{f-t_o}\cL$ be the complex of vanishing cycles of $f$ along the fibre $f^{-1}(t_o)$ with coefficients in $\cL$, equipped with its monodromy operator $\rT$. We have $\phi_{f-t_o}\cL\simeq0$ if $t_o$ is not a critical value of $f$, and such critical values form a finite set in $\Afu$. The hypercohomology $\bH^k\big(f^{-1}(t_o),\phi_{f-t_o}\cL\big)$ is a finite dimensional $\CC$-vector space equipped with a monodromy operator $\rT$.

In general, let $E$ be a finite dimensional $\CC$-vector space equipped with an automorphism $\rT$. Given a choice of a logarithm of $\rT$, that is, writing $\rT=\exp(-2\pi i \rM)$ for some $\rM:E\to E$, we denote by $\whRHm(E,\rT)$ the $\CC\lpr\hb\rpr$-vector space $E\lpr\hb\rpr$ equipped with the connection $d+\rM d\hb/\hb$. Given $t_o\in\CC$, we set $\wh\cE^{-t_o/\hb}=(\CC\lpr\hb\rpr,d+t_od\hb/\hb^2)$.

\begin{theoreme}\label{th:main}
We have, for each $k$,
\bgroup\numstareq
\begin{multline}\label{eq:main*}
\Big(\bH^{k+1}\big(X,\DR(\wh\cE_X^{-f/u}\otimes_{\cO_X}\cM)\big),\nabla_{\partial_\hb}\Big)\\\simeq\bigoplus_{t_o\in\CC}\wh\cE^{-t_o/u}\otimes_{\CC\lpr\hb\rpr}\whRHm\Big(\bH^k\big(f^{-1}(t_o),\phi_{f-t_o}\cL\big),\rT\Big).
\end{multline}
\egroup
\end{theoreme}

Notice that, since $\cM$ is $\cO_X$-coherent, the left-hand term can also be written as
\[
\Big(\bH^{k+1}\big(X,(\Omega_X^{\cbbullet}\otimes\cM\lpr\hb\rpr,\nabla-df\otimes \id_\cM/\hb)\big),\nabla_{\partial_\hb}\Big),
\]
(see \cite[Prop\ptbl5.1]{Hartshorne75}) and, in this form, the result has been conjectured by M\ptbl Kontsevich.

\begin{proof}
We use the propositions stated and proved below in the following way. We first show that the natural morphism from the left-hand term of \eqref{eq:main*} to the corresponding analytic object is an isomorphism. By considering a covering of $X$ by quasi-projective Zariski open sets and a spectral sequence argument, we reduce to the case where $X$ is quasi-projective. Then, by using a compactification $F$ of $f$ as in \eqref{eq:diag}, we can replace the left-hand term of \eqref{eq:main*} with that of \eqref{eq:gaga*} below, and express it in an analytic way as the right-hand term of \eqref{eq:gaga*}, according to Proposition \ref{prop:gaga}. The assertion follows from Proposition \ref{prop:comparaison}.

The analytic analogue of the left-hand term is in turn identified with the right-hand term of \eqref{eq:main*} according to Proposition \ref{prop:BSK}, as we can use a Nagata compactification of the graph of $f$ in order to apply the constructibility results of \S\ref{subsec:hrconst}.
\end{proof}

Assume for instance that $(\cM,\nabla)=(\cO_X,d)$. It is more common (\cf \eg \cite{Bibi97b}) to consider the algebraic twisted de~Rham complex $\big(\Omega_X^{\cbbullet}[\hb,\hbm],d-df/\hb\big)$. The hypercohomology $\big(\bH^{k+\dim X}\big(X,(\Omega_X^{\cbbullet}[\hbm],d-df\hbm)\big),\nabla_{\partial_\hb}\big)$ is known to be identified with the Laplace transform $\Fou M^k$ of the $k$-th direct image $M^k:=\cH^k f_+\cO_X$ of the $\cD_X$-module $\cO_X$, and therefore $\big(\bH^{k+\dim X}\big(X,(\Omega_X^{\cbbullet}[\hb,\hbm],d-df/\hb)\big),\nabla_{\partial_\hb}\big)$ is identified with $G^k:=\CC[\hb,\hbm]\otimes_{\CC[\hbm]}\nobreak \Fou M^k$. Using the regularity of $M^k$, the compatibility of vanishing cycles with proper direct images and classical results on the Laplace transform of a regular holonomic $\Clt$-module, one finds
\begin{multline}\label{eq:polu}
\Big(\CC\lpr\hb\rpr\otimes_{\CC[\hb,\hbm]}\bH^{k+1}\big(X,(\Omega_X^{\cbbullet}[\hb,\hbm],d-df/\hb)\big),\nabla_{\partial_\hb}\Big)\\
\simeq\bigoplus_{t_o\in\CC}\wh\cE^{-t_o/\hb}\otimes_\CC\whRHm\Big(\bH^k\big(g^{-1}(t_o),\phi_{g-t_o}\bR j_*\CC_{X^\an}\big),\rT\Big),
\end{multline}
where we use a commutative diagram with $j$ open and $g$ proper:
\[
\xymatrix{
X\ar@<-.3ex>@{^{ (}->}[r]^-j\ar[rd]_f&X'\ar[d]^{g}\\&\Afu
}
\]

If $f$ is proper (so that $f=g$), the right-hand sides of \eqref{eq:polu} and \eqref{eq:main*} coincide. It is not clear a priori that the left-hand sides coincide, so we cannot obtain \eqref{eq:main*} from \eqref{eq:polu} directly when $f$ is proper, but this follows from the results explained in \S\ref{sec:andR} below, which are an easy consequence of \cite{MSaito86} and \cite{Kapranov91}. On the other hand, if $f$ is not proper, the left-hand sides of \eqref{eq:polu} and \eqref{eq:main*} may differ, as shown in the following example, and thus \eqref{eq:main*} needs a different argument.

\begin{exemple}
Let $f\in\CC[t]$ be a non-constant polynomial in one variable and let $X$ be the Zariski open set of $\Afu$ complementary to $\{f'=0\}$. Then the formal twisted de~Rham complex%
\[
\CC[t,1/f']\lpr\hb\rpr\To{\hb\partial_t-f'}\CC[t,1/f']\lpr\hb\rpr
\]
has zero cohomology. Indeed, let us show for instance that the differential is onto. This amounts to showing that, given $\psi_{k_o},\psi_{k_o+1},\dots$ in $\CC[t,1/f']$, we can find $\varphi_{k_o},\varphi_{k_o+1},\dots$ in $\CC[t,1/f']$ such that
\[
\psi_{k_o}=-f'\varphi_{k_o},\quad\psi_{k_o+1}=\partial_t\varphi_{k_o}-f'\varphi_{k_o+1},\dots,\psi_{k+1}=\partial_t\varphi_{k}-f'\varphi_{k+1},\dots,
\]
a system which can be solved inductively because $f'$ is invertible in $\CC[t,1/f']$.

On the other hand, the complex
\[
\CC[t,1/f'][\hb,\hbm]\To{\hb\partial_t-f'}\CC[t,1/f'][\hb,\hbm]
\]
has cohomology in degree one only, and this cohomology is a free $\CC[\hb,\hbm]$-module of rank equal to $\deg f\cdot\#\{f(t)\mid f'(t)=0\}$.
\end{exemple}

\subsubsection*{Acknowledgements}
I thank Morihiko Saito for pointing out various inaccuracies and a gap in a first version of this article and for providing the method to fill it up, as well as for suggesting various simplifications and improvements.

\section{Formal twisted de~Rham complexes}

\subsection{Preliminary results on $\cF\lpr\hb\rpr$}\label{subsec:prelim}
Let $Y$ be a topological space and let $\cF$ be a sheaf of $\CC$-vector spaces on $Y$. Let $\hb$ be a new variable and set $\cF[\hb]=\CC[\hb]\otimes_\CC\nobreak\cF$. We denote by $\cF\lcr\hb\rcr$ the sheaf $\varprojlim_k\cF[\hb]/\hb^k\cF[\hb]$. A germ of section of $\cF\lcr\hb\rcr$ at $y\in Y$ consists of a series $\sum_{n\geq0}f_{n,y}\hb^n$ where $f_{n,y}$ is the germ at $y\in Y$ of a section $f_n\in\Gamma(U,\cF)$ for some open neighbourhood $U$ of $y$ which does not depend on $n$. Since the coefficients of a power series are uniquely determined, we have
\[
\Gamma(U,\cF\lcr\hb\rcr)=\Gamma(U,\cF)\lcr\hb\rcr
\]
for any open set $U\subset Y$. We now have $\cF\lpr\hb\rpr=\CC\lpr\hb\rpr\otimes_{\CC\lcr\hb\rcr}\cF\lcr\hb\rcr$.

Clearly, the projective system above is surjective. Therefore, given a bounded complex of sheaves on $Y$, $\lcr\hb\rcr$ commutes with taking cohomology sheaves in the cases considered in \cite[\S4]{Hartshorne75} as well as in the case where $Y$ is a complex analytic manifold and each term of the complex is an inductive limit of coherent $\cO_Y$-modules (the functor~$\sigma$ of \loccit being here the product of spaces of sections on compact Stein polydiscs). It follows that $\lpr\hb\rpr$ also commutes with taking cohomology sheaves in these cases.

Assume now that $Y$ is a complex algebraic variety, $D$ is a divisor in~$Y$, \hbox{$j:\nobreak Y\moins D\hto Y$} is the inclusion, and $\cF$ is a coherent sheaf of $\cO_Y(*D)$-modules. The sheaf $\cO_Y(*D)\lcr\hb\rcr$ is $\cO_Y(*D)$-flat and the natural morphism
\[
\cO_Y(*D)\lcr\hb\rcr\otimes_{\cO_Y(*D)}\cF\to\cF\lcr\hb\rcr
\]
is an isomorphism, as well as the corresponding analytic one (argue as in \cite[Prop\ptbl5.1]{Hartshorne75}). Tensoring with $\CC\lpr\hb\rpr$ gives\begin{equation}\label{eq:compu}
\cO_Y(*D)\lpr\hb\rpr\otimes_{\cO_Y(*D)}\cF\isom\cF\lpr\hb\rpr.
\end{equation}

\begin{lemme}\label{lem:gagau}
If $Y$ is projective, the algebraic/analytic comparison morphism
\[
H^k(Y,\cF\lpr\hb\rpr)\to H^k(Y^\an,\cF^\an\lpr\hb\rpr)
\]
is an isomorphism for each $k\in\NN$.
\end{lemme}

\begin{proof}
Since~$\cF$ is an inductive limit of coherent $\cO_Y$-modules, the algebraic/analytic comparison theorem holds for $\cF$ (\cf \cite[\S II.6.5]{Deligne70}). Similarly, it holds for each sheaf $\cF_n\defin\cF[\hb]/\hb^n\cF[\hb]$.

We first claim that the algebraic/analytic comparison morphism
\begin{equation}\label{eq:lcrhb}
H^k(Y,\cF\lcr\hb\rcr)\to H^k(Y^\an,\cF^\an\lcr\hb\rcr)
\end{equation}
is an isomorphism for each $k\in\NN$. Indeed, the same method as in \cite[Prop\ptbl6.1]{Hartshorne75} can be applied. One has to check that the conditions of \cite[Th\ptbl4.5]{Hartshorne75} are fulfilled for the projective systems $(\cF_n)$ and $(\cF^\an_n)$. In the algebraic case, we consider the basis of affine open sets of $Y$. Then, for each such set $U$, the projective system $\Gamma(U,\cF_n)=\Gamma(U\moins D,j^*\cF_n)$ is surjective, and $H^k(U,\cF_n)=0$ for $k>0$ and any~$n$. In the analytic case, we consider instead the family of compact polydiscs in $Y^\an$ (with respect to any choice of local coordinate system). For $U$ in such a family, $H^k(U,\cF^\an_n)=0$ for $k>0$ and any $n$, since $\cF^\an_n$ is an inductive limit of coherent sheaves. The argument of \cite[Th\ptbl4.5]{Hartshorne75} applies similarly to such a family. It remains to check that both projective systems $H^k(Y,\cF_n)$ and $H^k(Y^\an,\cF^\an_n)$ satisfy the Mittag-Leffler condition, a property which is clear since $\cF_n$ is a direct summand of $\cF_{n+1}$.

Since $Y$ is Noetherian and $Y^\an$ compact, and since $\CC\lpr\hb\rpr$ is $\CC\lcr\hb\rcr$-flat, we have $H^k(Y,\cF\lpr\hb\rpr)=\CC\lpr\hb\rpr\otimes_{\CC\lcr\hb\rcr}H^k(Y,\cF\lcr\hb\rcr)$ (\cf\cite[Prop\ptbl III.2.9]{Hartshorne80}, because $\cF\lpr\hb\rpr=\varinjlim_n\hb^{-n}\otimes\cF\lcr\hb\rcr$) and a similar equality for $Y^\an$ (\cf\cite[Prop\ptbl2.6.6]{K-S90}). Hence, tensoring \eqref{eq:lcrhb} with $\CC\lpr\hb\rpr$ gives the result 
\end{proof}

Let us end this section by comparing the effect of $\CC\lpr\hb\rpr\otimes_{\CC\lcr\hb\rcr}$ before and after taking direct image by $f$. As already remarked, both functors commute if $f$ is proper, or in the algebraic setting by working with the Zariski topology. We will now consider the case of an open embedding in the analytic topology.

\begin{proposition}\label{prop:compdirlim}
Let $Y$ be a complex manifold, let $Z\subset Y$ be a closed analytic subset of~$Y$ and let $j:X:=Y\moins Z\hto Y$ denote the open inclusion. Let $\cF^\cbbullet$ be a bounded complex of sheaves of $\CC\lcr\hb\rcr$-modules on $X$. Assume that $\cF^\cbbullet$ has constructible cohomology with respect to a Whitney stratification induced from one on $Y$. Then the natural morphism
\[
\CC\lpr\hb\rpr\otimes_{\CC\lcr\hb\rcr}\bR j_*\cF^\cbbullet\to\bR j_*(\CC\lpr\hb\rpr\otimes_{\CC\lcr\hb\rcr}\cF^\cbbullet)
\]
is a quasi-isomorphism.
\end{proposition}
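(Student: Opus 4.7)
The statement is local on $Y$ and is trivially true on $X$, so the task reduces to showing, for each point $y_o\in Z$, that the induced map on stalks is a quasi-isomorphism. My plan combines two standard principles: the conic structure of Whitney stratifications, which permits one to compute $(\bR j_*\cF^\cbbullet)_{y_o}$ on a single sufficiently small open neighbourhood of $y_o$ rather than as a colimit over shrinking ones, together with the observation that $\CC\lpr\hb\rpr\otimes_{\CC\lcr\hb\rcr}(-)$ is the filtered colimit $\varinjlim_n\hb^{-n}(-)$. The question thus becomes whether $\bR\Gamma$ on a good small neighbourhood commutes with that particular filtered colimit.

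Concretely, I would first fix a Whitney stratification of $Y$ refining the given one and inducing a stratification of $X$ along which the cohomology sheaves of $\cF^\cbbullet$ are locally constant. By Thom--Mather theory there exists a fundamental system of open neighbourhoods $\{V_\alpha\}_\alpha$ of $y_o$ in $Y$ such that every inclusion $V_\beta\cap X\hto V_\alpha\cap X$ between two sufficiently small members induces a quasi-isomorphism on $\bR\Gamma(-,\cF^\cbbullet)$. Consequently $(\bR j_*\cF^\cbbullet)_{y_o}$ is canonically represented by $\bR\Gamma(V_\alpha\cap X,\cF^\cbbullet)$ for any $V_\alpha$ in a cofinal subfamily; the same conclusion applies to $\CC\lpr\hb\rpr\otimes_{\CC\lcr\hb\rcr}\cF^\cbbullet$, since tensoring with $\CC\lpr\hb\rpr$ is exact and preserves local constancy of the cohomology on the strata.

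It then remains to prove the local identity
\[
\CC\lpr\hb\rpr\otimes_{\CC\lcr\hb\rcr}\bR\Gamma(V_\alpha\cap X,\cF^\cbbullet)\isom\bR\Gamma\bigl(V_\alpha\cap X,\CC\lpr\hb\rpr\otimes_{\CC\lcr\hb\rcr}\cF^\cbbullet\bigr).
\]
Writing $\CC\lpr\hb\rpr=\varinjlim_n\hb^{-n}\CC\lcr\hb\rcr$, the right-hand side equals $\bR\Gamma(V_\alpha\cap X,\varinjlim_n\cF^\cbbullet)$. Since $V_\alpha\cap X$ is an open subset of a complex manifold, hence paracompact and of finite cohomological dimension, hypercohomology of a bounded complex on it commutes with filtered colimits --- either via soft/flabby resolutions of bounded length, or via the convergent hypercohomology spectral sequence with $E_2^{p,q}=H^p(V_\alpha\cap X,\cH^q(\cF^\cbbullet))$, which is termwise compatible with filtered colimits. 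Chaining the three resulting quasi-isomorphisms yields the proposition.

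The main technical point is the first step: producing the cofinal system $\{V_\alpha\}$ on which $\bR\Gamma(V_\alpha\cap X,\cF^\cbbullet)$ is stable. This is a standard but delicate consequence of the conicity theorems for Whitney-stratified sets, and one has to verify that the usual argument works unchanged when the coefficient ring is $\CC\lcr\hb\rcr$ rather than $\CC$; the hypothesis that the stratification is \emph{induced from one on $Y$} is what makes $y_o$ itself sit in a stratum and allows the neighbourhoods $V_\alpha$ to be taken in $Y$. A secondary concern is to confirm that the filtered-colimit/hypercohomology interchange is legitimate, which uses boundedness of $\cF^\cbbullet$ and finite cohomological dimension of $V_\alpha\cap X$ in an essential way.
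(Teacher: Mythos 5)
Your strategy --- reduce to stalks at $y_o\in Z$, use Thom--Mather conicity to represent $(\bR j_*\cF^\cbbullet)_{y_o}$ by $\bR\Gamma(V_\alpha\cap X,\cF^\cbbullet)$ on a single good neighbourhood, then commute the colimit through $\bR\Gamma$ --- is close in spirit to the paper's second (M.~Saito) proof, but the last step has a genuine gap. You justify
\[
\CC\lpr\hb\rpr\otimes_{\CC\lcr\hb\rcr}\bR\Gamma(V_\alpha\cap X,\cF^\cbbullet)\isom\bR\Gamma\big(V_\alpha\cap X,\CC\lpr\hb\rpr\otimes_{\CC\lcr\hb\rcr}\cF^\cbbullet\big)
\]
by the claim that hypercohomology on a paracompact space of finite cohomological dimension commutes with filtered colimits. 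That claim is false. Finite cohomological dimension only gives a vanishing bound; it does not make $\Gamma(V,-)$ commute with a filtered colimit when $V$ is non-compact. Concretely, take $V=\NN$ with the discrete topology (paracompact, dimension zero, every sheaf flabby) and $\cF=\underline{\CC\lcr\hb\rcr}$. Then $\Gamma(V,\cF)=\prod_n\CC\lcr\hb\rcr$ and $\Gamma\big(V,\CC\lpr\hb\rpr\otimes_{\CC\lcr\hb\rcr}\cF\big)=\prod_n\CC\lpr\hb\rpr$, yet $\CC\lpr\hb\rpr\otimes_{\CC\lcr\hb\rcr}\prod_n\CC\lcr\hb\rcr$ is the strictly smaller set of families $(s_n)$ with pole order bounded uniformly in $n$ --- the element $(\hb^{-n})_n$ lies in the target but not in the source. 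So your ``soft/flabby resolutions of bounded length'' argument, and equally the termwise colimit in the hypercohomology spectral sequence, both fail at exactly the point you lean on them.

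The missing ingredient is a compactness/properness input. The paper's second proof makes exactly the conic reduction you envisage, but then carries it one crucial step further: near a point of the codimension-$k$ stratum, $\iota^{-1}\bR j_*\cF^\cbbullet$ is identified with $\bR p_*\cG^\cbbullet$ for the projection $p:V\times L\to V$ with $L$ the compact link, and it is \emph{properness of $p$} that makes $\CC\lpr\hb\rpr\otimes_{\CC\lcr\hb\rcr}$ commute with $\bR p_*$. Equivalently, one can use constructibility (with respect to the compact link $L$) to see that the relevant hypercohomology groups $H^p(V_\alpha\cap X,\cH^q(\cF^\cbbullet))$ are \emph{finitely generated} $\CC\lcr\hb\rcr$-modules, after which the tensor is harmless; but that finiteness is precisely what is not supplied by ``finite cohomological dimension.'' The paper's first proof sidesteps the issue altogether by Poincar\'e--Verdier duality: $\CC\lpr\hb\rpr$-flatness over $\CC\lcr\hb\rcr$ makes $\CC\lpr\hb\rpr\otimes_{\CC\lcr\hb\rcr}$ commute with the dualizing functor, the statement for $\bR j_!$ is immediate because $j$ is an open inclusion, and duality exchanges $\bR j_!$ with $\bR j_*$. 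Either route supplies the compactness you need; your present argument does not.
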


\begin{proof}
Let $\bD_{\CC\lcr\hb\rcr}$ (\resp $\bD_{\CC\lpr\hb\rpr}$) denotes the Poincaré-Verdier duality functor on the derived category of complexes of sheaves of $\CC\lcr\hb\rcr$-modules (\resp $\CC\lpr\hb\rpr$-modules). By using flatness of $\CC\lpr\hb\rpr$ over $\CC\lcr\hb\rcr$, one checks that the natural morphism $\bD_{\CC\lpr\hb\rpr}(\CC\lpr\hb\rpr\otimes_{\CC\lcr\hb\rcr}\cF^\cbbullet)\to\CC\lpr\hb\rpr\otimes_{\CC\lcr\hb\rcr}\bD_{\CC\lcr\hb\rcr}(\cF^\cbbullet)$ is an isomorphism in the suitable derived category. Now, the proposition clearly holds if we replace $\bR j_*$ with $\bR j_!$, and moreover $\bR j_!\cF^\cbbullet$ has constructible cohomology on $Y$. Since Poincaré-Verdier duality exchanges both functors (\cf\eg\cite[Ex\ptbl VIII.3]{K-S90}), the proposition holds for $\bR j_*$.
\end{proof}

\begin{proof}[Sketch of another proof, due to M\ptbl Saito]
Fix a Whitney stratification of $Y$ such that~$Z$ is a union of strata, and denote by $X_k$ the union of $X$ and of the strata of codimension $\leq k$ in $Z$. We have the open inclusions $X\Hto{j_0}X_0\Hto{j_1}X_1\cdots\hto Y$, whose composition is $j$. By induction, it is enough to prove the result for the successive inclusions $j_k:X_{k-1}\hto X_k$ instead of $j$. The set $Z_k=X_k\moins X_{k-1}$ is a smooth manifold (the union of strata of codimension $k$ in $Z$). In the neighbourhood of each point $x_o$ of $Z_k$, $X_k$ is homeomorphic to $V\times C(L)$, where $V$ is a neighbourhood of $x_o$ in~$Z_k$, and $C(L)$ is the cone over the link $L$ of $V$ in $X_k$. In this picture, the inclusion~$j_k$ is induced by the inclusion $V\times C(L)^*\hto V\times C(L)$, with $C(L)^*=C(L)\moins\text{apex}$. The complex~$\cF^\cbbullet$ is then isomorphic to the pull-back by $\rho:V\times C(L)^*\to V\times L$ of a constructible complex $\cG^\cbbullet$ on $V\times L$, and $\iota^{-1}\bR j_*\cF^\cbbullet$ is identified with $\bR p_*\cG^\bbullet$, where $p:V\times L\to V$ denotes the projection. Since $p$ is proper, $\CC\lpr\hb\rpr\otimes_{\CC\lcr\hb\rcr}$ commutes with $\bR p_*$, hence the result.
\end{proof}

\subsection{Algebraic-analytic comparison for the formal twisted de~Rham complex}\label{subsec:gaga}
We now come back to the setting of \S\ref{sec:intro}. Let $F:Y\to\PP^1$ be a morphism from a smooth projective variety $Y$ to~$\PP^1$, extending $f$, that is, such that there is a commutative diagram
\begin{equation}\label{eq:diag}
\begin{array}{c}
\xymatrix{
X\ar[d]_f\ar@{^{ (}->}[r]^-j&Y\ar[d]^F\\\CC\ar@{^{ (}->}[r]&\PP^1
}
\end{array}
\end{equation}
and such that $Y\moins X$ is a divisor $D$ in $Y$. Then $(\cM,\nabla)$ extends as a coherent $\cO_Y(*D)$-module with connection having regular singularity along $D$, that we continue to denote by $(\cM,\nabla)$. In particular $\cM$ is $\cD_Y$-holonomic, hence $\cD_Y$-coherent. We will denote by $\wh\cO_{(Y,D)}$ the sheaf $\cO_Y(*D)\lpr\hb\rpr$ and $\wh\cD_{(Y,D)}=\wh\cO_{(Y,D)}\otimes_{\cO_Y}\nobreak\cD_Y=\cD_Y\otimes_{\cO_Y}\nobreak\wh\cO_{(Y,D)}$. We denote by $\wh\cE_{(Y,D)}^{-F/u}$ the $\wh\cD_{(Y,D)}$-module $\cO_Y(*D)\lpr\hb\rpr$ equipped with the connection $d-dF/u$. The left-hand term in \eqref{eq:main*} becomes (up to changing~$k+1$ to $k$)
\begin{multline*}
\Big(\bH^k\big(Y,\DR(\wh\cE_{(Y,D)}^{-F/u}\otimes_{\cO_Y}\cM)\big),\nabla_{\partial_\hb}\Big)\\=\Big(\bH^k\big(Y,(\Omega_Y^{\cbbullet}(*D)\lpr\hb\rpr\otimes\cM,\nabla-dF/\hb\otimes \id_\cM)\big),\nabla_{\partial_\hb}\Big).
\end{multline*}
Similarly, we denote by $Y^\an$, $\cM^\an$, etc., the corresponding analytic objects, and we consider the analytic twisted de~Rham cohomology.

\begin{proposition}\label{prop:gaga}
Let $\cM$ be a coherent $\cD_Y$-module. Then, for each $k\geq0$, the natural morphism
\bgroup\numstareq
\begin{equation}\label{eq:gaga*}
\bH^k\big(Y,\DR(\wh\cE_{(Y,D)}^{-F/u}\otimes_{\cO_Y}\cM)\big)
\to \bH^k\big(Y^\an,\DR(\wh\cE_{(Y,D)}^{-F/u}\otimes_{\cO_{Y^\an}}\cM^\an)\big)
\end{equation}
\egroup
is an isomorphism.
\end{proposition}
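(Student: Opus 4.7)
The plan is to reduce the proposition to the sheaf-level GAGA statement of Lemma \ref{lem:gagau} via a hypercohomology spectral sequence argument. In the setting of \S\ref{subsec:gaga}, the module $\cM$ has been extended across~$D$ as a coherent $\cO_Y(*D)$-module with regular singularities, so each wedge $\cG^p := \Omega_Y^p(*D)\otimes_{\cO_Y}\cM$ is a coherent $\cO_Y(*D)$-module. By the isomorphism~\eqref{eq:compu}, the $p$-th term of the algebraic twisted de~Rham complex
\[
\cK^\cbbullet := \big(\Omega_Y^\cbbullet(*D)\lpr\hb\rpr\otimes_{\cO_Y}\cM,\ \nabla - dF\otimes\id_\cM/\hb\big)
\]
can be written as $\cK^p = \cG^p\lpr\hb\rpr$, and the analogous description holds in the analytic category.

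I would apply Lemma \ref{lem:gagau} termwise: for each pair $(p,q)$, the natural GAGA comparison morphism
\[
H^q\big(Y,\cG^p\lpr\hb\rpr\big)\longrightarrow H^q\big(Y^\an,\cG^{p,\an}\lpr\hb\rpr\big)
\]
is a $\CC\lpr\hb\rpr$-linear isomorphism. Next, I would compare the two hypercohomology spectral sequences
\[
E_1^{p,q} = H^q(Y,\cK^p) \Longrightarrow \bH^{p+q}(Y,\cK^\cbbullet),
\]
abutting respectively to the algebraic and analytic hypercohomology. The canonical analytification morphism $\cK^\cbbullet \to (\cK^\cbbullet)^\an$ induces a morphism between these spectral sequences; it is an isomorphism on~$E_1$ by the previous step and it is compatible with the $d_1$-differential, since the latter is induced termwise by the same map $\nabla - dF/\hb$ in both settings. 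Convergence is automatic because the complex $\cK^\cbbullet$ is bounded ($\Omega_Y^p = 0$ for $p > \dim Y$), so the isomorphism persists on all subsequent pages and on the abutment.

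The main point to verify, and in the end a mild one, is that $\cM$ effectively enters the argument through an $\cO_Y(*D)$-coherent structure, so that Lemma \ref{lem:gagau} applies: this is exactly what is granted by the extension fixed at the start of \S\ref{subsec:gaga}. For an arbitrary coherent $\cD_Y$-module as in the hypothesis, one would first replace $\cM$ by its localization $\cO_Y(*D)\otimes_{\cO_Y}\cM$, which leaves the twisted de~Rham complex of $\wh\cE_{(Y,D)}^{-F/u}\otimes\cM$ unchanged up to canonical isomorphism (since $\wh\cE_{(Y,D)}^{-F/u}$ is already localized along~$D$), and then invoke $\cD_Y$-holonomicity on projective~$Y$ to obtain the required $\cO_Y(*D)$-coherence, or argue via a global resolution by coherent $\cO_Y(*D)$-modules before applying the spectral sequence above.
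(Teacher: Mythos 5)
Your spectral-sequence reduction to Lemma~\ref{lem:gagau} is exactly the route the paper takes: one compares the two first-quadrant spectral sequences with $E_1^{p,q}=H^q(Y,\Omega_Y^p(*D)\lpr\hb\rpr\otimes_{\cO_Y}\cM)$ and their analytic counterparts, the morphism of spectral sequences is induced by analytification, and boundedness of the complexes gives convergence. So the core of the argument is sound and matches the paper.

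The gap is in your last paragraph, where you dispose of the hypothesis ``$\cM$ a coherent $\cD_Y$-module.'' Two problems. First, holonomicity is not assumed in the statement of the proposition, so invoking it is not available. Second, and more seriously, even for a holonomic $\cD_Y$-module the localization $\cO_Y(*D)\otimes_{\cO_Y}\cM$ is generally \emph{not} $\cO_Y(*D)$-coherent: that property holds precisely when the restriction to $Y\moins D$ is a vector bundle, which fails whenever $\cM$ has singularities off $D$. So the step ``invoke $\cD_Y$-holonomicity to obtain $\cO_Y(*D)$-coherence'' does not go through, and without it you cannot apply Lemma~\ref{lem:gagau} to the $E_1$-terms, nor even justify the identification $\Omega_Y^p(*D)\lpr\hb\rpr\otimes_{\cO_Y}\cM\simeq\bigl(\Omega_Y^p(*D)\otimes_{\cO_Y}\cM\bigr)\lpr\hb\rpr$ via~\eqref{eq:compu}, which also presupposes $\cO_Y(*D)$-coherence.

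The paper's fix is different and worth internalizing: write $\cM=\varinjlim_n\cM_n$ as a filtered colimit of coherent $\cO_Y$-submodules (always possible for a coherent $\cD_Y$-module), apply Lemma~\ref{lem:gagau} to each $\cF=\cO_Y(*D)\otimes_{\cO_Y}\cM_n$, which \emph{is} $\cO_Y(*D)$-coherent, and then pass to the inductive limit. This works because cohomology commutes with filtered colimits on the Noetherian space $Y$ (algebraic side) and on the compact space $Y^\an$ (analytic side). Your vaguer alternative, ``a global resolution by coherent $\cO_Y(*D)$-modules,'' points in a workable direction but needs this colimit mechanism to be made precise; as written it is not a proof.
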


\begin{proof}[Proof]
The natural morphism \eqref{eq:gaga*} comes from a morphism of complexes, and it is enough to show that the morphism between the $E_1$ terms
\[
H^k(Y,\Omega_Y^\ell(*D)\lpr\hb\rpr\otimes_{\cO_Y}\cM)\to H^k(Y^\an,\Omega_{Y^\an}^\ell(*D)\lpr\hb\rpr\otimes_{\cO_{Y^\an}}\cM^\an)
\]
of the natural spectral sequences is an isomorphism (\cf \cite[\S II.6.6]{Deligne70}). We know that $\cM=\varinjlim_n\cM_n$ with $\cM_n$ coherent over~$\cO_Y$. Applying the preliminary results above to $Y$ and $\cF=\cO_Y(*D)\otimes_{\cO_Y}\cM_n$, we conclude that this comparison morphism is an isomorphism for each $\cM_n$ and thus, since $Y$ is Noetherian and $Y^\an$ is compact, for~$\cM$ also by passing to the inductive limit.
\end{proof}

\subsection{Direct images of formal twisted de~Rham complexes}
The results in this subsection have been suggested by M\ptbl Saito. We will work in the category of smooth algebraic (or complex analytic) varieties~$Y$ equipped with a reduced divisor~$D$. Morphisms $\pi:(Y',D')\to(Y,D)$ consist of morphisms $\pi:Y'\to\nobreak Y$ such that $\pi^{-1}(D)=D'$. In particular, $\wh\cO_{(Y',D')}$ is a $\pi^{-1}\wh\cO_{(Y,D)}$-module. Let~$F$ be a rational (or meromorphic) function on $Y$ with poles along $D$. 

It will be convenient to denote by $\pDR(\cbbullet)$ the complex $\DR(\cbbullet)$ shifted by the dimension of the ambient variety.

\begin{proposition}\label{prop:imdiru}
If $\cM'$ is $\cD_{Y'}$-coherent (or good relative to $\pi$ in the complex analytic case) and $\pi$ is proper, there is a functorial isomorphism
\[
\pDR(\wh\cE_{(Y,D)}^{-F/u}\otimes_{\cO_Y}\pi_+\cM')\isom\bR\pi_*\pDR(\wh\cE_{(Y',D')}^{-(F\circ\pi)/u}\otimes_{\cO_{Y'}}\cM').
\]
\end{proposition}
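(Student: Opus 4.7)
The plan is to deduce the proposition from two classical compatibilities in $\cD$-module theory, adapted to the $\wh\cD_{(Y,D)}$-setting: the projection formula for proper morphisms and the commutation of $\pDR$ with proper direct image.

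First, I would observe that the pullback of $\wh\cE_{(Y,D)}^{-F/u}$ as a module with connection along $\pi$ is canonically identified with $\wh\cE_{(Y',D')}^{-(F\circ\pi)/u}$, since $\pi^{-1}(D)=D'$ gives $\pi^*\cO_Y(*D)=\cO_{Y'}(*D')$ and hence $\pi^*\wh\cO_{(Y,D)}=\wh\cO_{(Y',D')}$, while $\pi^*(dF)=d(F\circ\pi)$ identifies the connections. Since $\wh\cO_{(Y,D)}$ is $\cO_Y(*D)$-flat, the projection formula for $\cD$-modules along the proper morphism $\pi$ yields an isomorphism
\[
\wh\cE_{(Y,D)}^{-F/u}\otimes_{\cO_Y}\pi_+\cM'\isom\pi_+\bigl(\wh\cE_{(Y',D')}^{-(F\circ\pi)/u}\otimes_{\cO_{Y'}}\cM'\bigr)
\]
in the derived category of $\wh\cD_{(Y,D)}$-modules.

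Next, I would apply the classical commutation $\pDR\,\pi_+\cN'\simeq\bR\pi_*\pDR\,\cN'$, valid for any $\cD_{Y'}$-coherent (or, in the analytic case, $\pi$-good) module $\cN'$ and a proper morphism $\pi$. The standard proof, via the identification of $\pi_+$ with $\bR\pi_*$ of a relative de~Rham/Spencer complex and the factorization of absolute de~Rham as relative de~Rham followed by the de~Rham of $Y$, transfers verbatim to the $\wh\cD_{(Y,D)}$-setting, since the resolutions involved remain exact after the flat base change from $\cO_Y(*D)$ to $\wh\cO_{(Y,D)}$. Applying this to $\cN'=\wh\cE_{(Y',D')}^{-(F\circ\pi)/u}\otimes_{\cO_{Y'}}\cM'$ and composing with the first step gives the proposition.

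The main obstacle is technical rather than conceptual: one has to make precise how $\pi_+$ is defined for $\wh\cD_{(Y,D)}$-modules, check that the transfer bimodule $\wh\cD_{(Y,D)\leftarrow(Y',D')}$ is obtained from its $\cD$-module counterpart by a flat tensor product with $\CC\lpr\hb\rpr$, and verify that the coherence (or relative-goodness) assumption on $\cM'$ is strong enough that the derived tensor products and direct images collapse to the naive complexes used above. Once these verifications are granted, the proposition reduces to the combination of the two classical facts invoked.
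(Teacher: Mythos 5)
Your second step---commuting $\pDR$ with $\wh\pi_+$ as for ordinary $\cD$-modules---is exactly what the paper does. The first step, however, contains a genuine gap. You claim that $\pi^{-1}(D)=D'$ gives $\pi^*\cO_Y(*D)=\cO_{Y'}(*D')$ ``and hence $\pi^*\wh\cO_{(Y,D)}=\wh\cO_{(Y',D')}$'', and then you feed this into the projection formula. But the second equality is false in general: the natural map
\[
\cO_{Y'}(*D')\otimes_{\pi^{-1}\cO_Y(*D)}\pi^{-1}\cO_Y(*D)\lpr\hb\rpr\to\cO_{Y'}(*D')\lpr\hb\rpr
\]
need not be an isomorphism, because $(-)\lpr\hb\rpr$ is a completion and does not commute with $\pi^*$. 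The paper flags this explicitly in the proof of Lemma~\ref{lem:imdiru2} (``A priori, we cannot apply the projection formula, since the natural morphism $\cO_{Y'}(*D')\otimes_{\pi^{-1}\cO_Y(*D)}\pi^{-1}\cO_Y(*D)\lpr\hb\rpr\to\cO_{Y'}(*D')\lpr\hb\rpr$ is not an isomorphism in general''), and the remark following Proposition~\ref{prop:imdiru}, with M.~Saito's example $t'\mapsto t=t'^2$, shows that a naive projection-formula identification of the twisted complexes really does fail in this formal setting. So the $\cO_Y(*D)$-flatness of $\wh\cO_{(Y,D)}$ that you invoke is not enough; the obstruction lies in the base change, not in flatness.

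This is precisely why the paper's proof of Lemma~\ref{lem:imdirfu} splits into two steps rather than invoking the projection formula once. Step one only pulls out the exponential factor as a $\pi^{-1}$-module (not as a $\pi^*$-module), reducing to the ``untwisted'' statement that $\wh\cO_{(Y,D)}\otimes_{\cO_Y}\pi_+\cM'\to\wh\pi_+(\wh\cO_{(Y',D')}\otimes_{\cO_{Y'}}\cM')$ is an isomorphism. Step two then establishes that by Hartshorne's method: reduce by functoriality to induced modules $\cM'=\cF'\otimes_{\cO_{Y'}}\cD_{Y'}$ with $\cF'$ coherent, and prove directly (Lemmas~\ref{lem:imdiru} and~\ref{lem:imdiru2}) that $R^k\pi_*$ commutes with $\lpr\hb\rpr$ on such coefficients. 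You acknowledge in your final paragraph that some verification about the transfer bimodule and tensor products is needed, but you treat it as routine bookkeeping; in fact it is the entire content of the lemma, and your plan as written would, if taken literally, ``prove'' a statement that Saito's example contradicts. To repair the argument, replace the projection-formula step with an explicit verification of the untwisted comparison along the lines of Step two.
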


\begin{remarque}
If $\cM$ is $\cO_Y(*D)$-coherent, it follows from \eqref{eq:compu} that the natural morphism
\[
\DR(\wh\cE_{(Y,D)}^{-F/u}\otimes_{\cO_Y}\cM)\to\big(\Omega_Y^\cbbullet\otimes\cM\lpr\hb\rpr,\nabla-df/\hb\otimes\id_{\cM})
\]
is an isomorphism (termwise) in the algebraic or the analytic setting.

However, in the setting of Proposition \ref{prop:imdiru}, if $\cM'$ is $\cO_{Y'}(*D')$-coherent but $\pi_+\cM'$ is not with $\cO_Y(*D)$-coherent cohomology, this does not apply to $\cH^j\pi_+\cM'$ and the complex $\bR\pi_*(\Omega_{Y'}^{n'+\cbbullet}\otimes\nobreak\cM'\lpr\hb\rpr,\nabla'-d(F\circ\pi)\otimes\id_{\cM'}/\hb)$ may not be quasi-isomorphic to $(\Omega_Y^{n+\cbbullet}\otimes\nobreak(\pi_+\cM')\lpr\hb\rpr,\nabla-dF\otimes\id_{\pi_+\cM'}/\hb)$, already for a finite morphism $\pi$ (\eg \hbox{$\pi:t'\mto t=t^{\prime2}$} from $\Afu$ to $\Afu$, $F(t)=t$, $D=\emptyset$ and $\cM'=(\cO_{\Afu},d)$, an example due to M\ptbl Saito). 
\end{remarque}

\begin{proof}[\proofname\ of Proposition \ref{prop:imdiru}]
Let us set
\[
\wh\cD_{(Y,D)\from(Y',D')}=\pi^{-1}\wh\cD_{(Y,D)}\otimes_{\pi^{-1}\cD_Y}\cD_{Y\from Y'}\otimes_{\cD_{Y'}}\wh\cD_{(Y',D')},
\]
which is a $(\pi^{-1}\wh\cD_{(Y,D)},\wh\cD_{(Y',D')})$-bimodule. If $\wh\cM'$ is a left $\wh\cD_{(Y',D')}$-module, we set (as usual) $\wh\pi_+\wh\cM'=\bR\pi_*(\wh\cD_{(Y,D)\from(Y',D')}\otimes^{\bL}_{\wh\cD_{(Y',D')}}\wh\cM')$. The proposition is a direct consequence of Lemma \ref{lem:imdirfu} below, by using the isomorphism $\bR\pi_*\pDR\simeq\pDR\,\wh\pi_+$ proved in the present setting as for $\cD$-modules. 
\end{proof}

\begin{lemme}\label{lem:imdirfu}
With the assumptions of Proposition \ref{prop:imdiru}, there is a functorial isomorphism
\[
\wh\cE_{(Y,D)}^{-F/u}\otimes_{\cO_Y}\pi_+\cM'\to\wh\pi_+(\wh\cE_{(Y',D')}^{-(F\circ\pi)/u}\otimes_{\cO_{Y'}}\cM').
\]
\end{lemme}

\begin{proof}
The proof is done in the following steps.
\begin{enumerate}
\item
We prove that there is a functorial isomorphism in $D^b(\pi^{-1}\wh\cD_{(Y,D)})$:
\begin{multline*}
\wh\cD_{(Y,D)\from(Y',D')}\otimes^{\bL}_{\wh\cD_{(Y',D')}}(\wh\cE_{(Y',D')}^{-(F\circ\pi)/u}\otimes_{\cO_{Y'}}\cM')\\
\simeq
\pi^{-1}\wh\cE_{(Y,D)}^{-F/u}\otimes_{\wh\cO_{(Y,D)}}\wh\cD_{(Y,D)\from(Y',D')}\otimes^{\bL}_{\wh\cD_{(Y',D')}}(\wh\cO_{(Y',D')}\otimes_{\cO_{Y'}}\cM').
\end{multline*}
Applying $\bR\pi_*$ we find a morphism in $D^b(\wh\cD_{(Y,D)})$
\[
\wh\pi_+(\wh\cE_{(Y',D')}^{-(F\circ\pi)/u}\otimes_{\cO_{Y'}}\cM')\to\wh\cE_{(Y,D)}^{-F/u}\otimes_{\wh\cO_{(Y,D)}}\wh\pi_+(\wh\cO_{(Y',D')}\otimes_{\cO_{Y'}}\cM'),
\]
which is functorial with respect to $\cM'$, and is an isomorphism if $\pi$ is proper.
\item
It is then enough to prove that the natural morphism in $D^b(\wh\cD_{(Y,D)})$
\[
\wh\cO_{(Y,D)}\otimes_{\cO_Y}\pi_+\cM'\to\wh\pi_+(\wh\cO_{(Y',D')}\otimes_{\cO_{Y'}}\cM')
\]
is an isomorphism. We will argue as in \cite[Prop\ptbl6.2]{Hartshorne75}.
\end{enumerate}

For these steps, it is equivalent but simpler to give the proof of the analogous statements for a right $\cD$-module $\cM'$. We then set $\wh\cM'=\cM'\otimes_{\cO_{Y'}}\wh\cO_{(Y',D')}$.

\subsubsection*{Proof of Step one}
Let us note that $\wh\cM'\otimes_{\wh\cO_{(Y',D')}}\wh\cE_{(Y',D')}^{-(F\circ\pi)/u}$ is naturally equipped with a right $\wh\cD_{(Y',D')}$-module structure. One first checks that, for any left $\wh\cD_{(Y',D')}$-module $\wh\cM''$, there is a natural isomorphism
\[
\big(\wh\cM'\otimes_{\wh\cO_{(Y',D')}}\wh\cE_{(Y',D')}^{-(F\circ\pi)/u}\big)\otimes_{\wh\cD_{(Y',D')}}\wh\cM''\isom\wh\cM'\otimes_{\wh\cD_{(Y',D')}}\big(\wh\cE_{(Y',D')}^{-(F\circ\pi)/u}\otimes_{\wh\cO_{(Y',D')}}\wh\cM''\big)
\]
given by $(m'\otimes1)\otimes m''\mto m'\otimes(1\otimes m'')$.

Applying this to $\wh\cM''=\Sp^\cbbullet(\wh\cD_{(Y',D')})\otimes_{\wh\cO_{(Y',D')}}\wh\cD_{(Y',D')\to(Y,D)}$, where $\Sp^\cbbullet$ is the Spencer complex, we find
\begin{multline*}
\big(\wh\cM'\otimes_{\wh\cO_{(Y',D')}}\wh\cE_{(Y',D')}^{-(F\circ\pi)/u}\big)\otimes^{\bL}_{\wh\cD_{(Y',D')}}\wh\cD_{(Y',D')\to(Y,D)}\\
\simeq\wh\cM'\otimes^{\bL}_{\wh\cD_{(Y',D')}}\big(\wh\cE_{(Y',D')}^{-(F\circ\pi)/u}\otimes_{\wh\cO_{(Y',D')}}\wh\cD_{(Y',D')\to(Y,D)}\big),
\end{multline*}
as right $\pi^{-1}\wh\cD_{(Y,D)}$-modules. Now, as $(\wh\cD_{(Y',D')},\pi^{-1}\wh\cD_{(Y,D)})$-bimodules,
\begin{align*}
\wh\cE_{(Y',D')}^{-(F\circ\pi)/u}\otimes_{\wh\cO_{(Y',D')}}\wh\cD_{(Y',D')\to(Y,D)}&=\wh\cE_{(Y',D')}^{-(F\circ\pi)/u}\otimes_{\pi^{-1}\wh\cO_{(Y,D)}}\pi^{-1}\wh\cD_{(Y,D)}\\
&=\wh\cO_{(Y',D')}\otimes_{\pi^{-1}\wh\cO_{(Y,D)}}\pi^{-1}(\wh\cE_{(Y,D)}^{-F/u}\otimes_{\wh\cO_{(Y,D)}}\wh\cD_{(Y,D)}).
\end{align*}
It remains to check that, as a left and right $\wh\cD_{(Y,D)}$-module, $\wh\cE_{(Y,D)}^{-F/u}\otimes_{\wh\cO_{(Y,D)}}\wh\cD_{(Y,D)}$ is isomorphic to $\wh\cD_{(Y,D)}\otimes_{\wh\cO_{(Y,D)}}\wh\cE_{(Y,D)}^{-F/u}$. This is obtained by sending $1\otimes P(x,\hb,\partial)$ to $P(x,\hb,\partial+\hb^{-1}\partial F)\otimes 1$.\qed

\subsubsection*{Proof of Step two}
By functoriality, it is enough to consider the case where $\cM'=\cF'\otimes_{\cO_{Y'}}\cD_{Y'}$, where $\cF'$ is $\cO_{Y'}$-coherent. We now notice that $(\cF'\otimes_{\cO_{Y'}}\cD_{Y'})\otimes_{\cO_{Y'}}\wh\cO_{(Y',D')}=(\wh\cO_{(Y',D')}\otimes_{\cO_{Y'}}\cF')\otimes_{\wh\cO_{(Y',D')}}\wh\cD_{(Y',D')}$. We will make use of the following lemmas.

\begin{lemme}\label{lem:imdiru}
Let $\pi:Y'\to Y$ be a proper morphism of smooth complex algebraic varieties, let $D'$ be a divisor in $Y'$, and let $\cG'$ be a coherent $\cO_{Y'}(*D')$-module. Then, for each $k$, the natural morphism $R^k\pi_*(\cG'\lpr\hb\rpr)\to R^k\pi_*(\cG')\lpr\hb\rpr$ is an isomorphism. The same result holds in the analytic category, provided that $\cG'$ is good (\ie $\cG'=\cO_{Y'}(*D')\otimes_{\cO_{Y'}}\cF'$ for some $\cO_{Y'}$-coherent submodule $\cF'\subset\cG'$).
\end{lemme}

\begin{proof}
We apply \cite[Th\ptbl4.5]{Hartshorne75} to the functor $\pi_*$, and argue as in Lemma \ref{lem:gagau}, by working first with $\lcr\hb\rcr$.
\end{proof}

\begin{lemme}\label{lem:imdiru2}
With the same assumption as in Lemma \ref{lem:imdiru}, assume moreover that $D'=\pi^{-1}(D)$ for some divisor $D$ of $Y$. Then, for each $k$, the natural morphism $\cO_Y(*D)\lpr\hb\rpr\otimes_{\cO_Y(*D)}R^k\pi_*(\cG')\to R^k\pi_*\big(\cO_{Y'}(*D')\lpr\hb\rpr\otimes_{\cO_{Y'}(*D')}\cG'\big)$ is an isomorphism.
\end{lemme}

\begin{proof}
A priori, we cannot apply the projection formula, since the natural morphism $\cO_{Y'}(*D')\otimes_{\pi^{-1}\cO_Y(*D)}\pi^{-1}\cO_Y(*D)\lpr\hb\rpr\to\cO_{Y'}(*D')\lpr\hb\rpr$ is not an isomorphism in general. But the result follows from Lemma \ref{lem:imdiru} together with \eqref{eq:compu}, since $R^k\pi_*(\cG')$ is $\cO_Y(*D)$-coherent (indeed, $\cG'=\cO_{Y'}(*D')\otimes_{\cO_{Y'}}\cF'$ for some $\cO_{Y'}$-coherent submodule $\cF'\subset\cG'$; we also have $\cG'=\pi^{-1}\cO_Y(*D)\otimes_{\pi^{-1}\cO_Y}\cF'$ and we can then apply the projection formula).
\end{proof}

We conclude Step two by noticing that $\pi_+(\cF'\otimes_{\cO_{Y'}}\cD_{Y'})=(\bR\pi_*\cF')\otimes_{\cO_Y}\cD_Y$ and
\[
\wh\pi_+\big((\wh\cO_{(Y',D')}\otimes_{\cO_{Y'}}\cF')\otimes_{\wh\cO_{(Y',D')}}\wh\cD_{(Y',D')}\big)=\bR\pi_*(\wh\cO_{(Y',D')}\otimes_{\cO_{Y'}}\cF')\otimes_{\wh\cO_{(Y,D)}}\wh\cD_{(Y,D)},
\]
so the desired isomorphism follows from Lemma \ref{lem:imdiru2}:
\begin{align*}
\pi_+(\cF'\otimes_{\cO_{Y'}}\cD_{Y'})\otimes_{\cO_Y}\wh\cO_{(Y,D)}&=(\bR\pi_*\cF')\otimes_{\cO_Y}\cD_Y\otimes_{\cO_Y}\wh\cO_{(Y,D)}\\
&=(\bR\pi_*\cF')\otimes_{\cO_Y}\wh\cO_{(Y,D)}\otimes_{\cO_Y}\cD_Y\\
&=(\bR\pi_*\cF')\otimes_{\cO_Y}\wh\cO_{(Y,D)}\otimes_{\wh\cO_{(Y,D)}}\wh\cD_{(Y,D)}\\
&=\bR\pi_*(\cF'\otimes_{\cO_{Y'}}\wh\cO_{(Y',D')})\otimes_{\wh\cO_{(Y,D)}}\wh\cD_{(Y,D)}\\
&=\wh\pi_+\big((\wh\cO_{(Y',D')}\otimes_{\cO_{Y'}}\cF')\otimes_{\wh\cO_{(Y',D')}}\wh\cD_{(Y',D')}\big).\qedhere
\end{align*}
\end{proof}

\section{Formal twisted de~Rham complex in the analytic setting and nearby/vanishing cycles}\label{sec:andR}
In this section, $X$ denotes a complex analytic manifold of dimension~$n$, and \hbox{$f:X\to\CC$} is a holomorphic function on $X$. We denote by $\cM$ a locally free $\cO_X$-module of finite rank $d$ equipped with a flat holomorphic connection $\nabla$, whose associated local system $\ker\nabla$ is denoted by $\cL$.

We will improve here \cite[Th\ptbl2.4(a)]{Kapranov91} in the sense that we compute the analytic twisted de~Rham complex in terms of the vanishing cycle complex without grading with respect to some filtration, by using some results in \cite{MSaito86}. This will produce the local analytic comparison analogous to \eqref{eq:main*}. Most ingredients in the proof of the following propositions are already present in \cite{MSaito86, Kapranov91,MSaito94,B-S07}, and the new input only consists in putting everything together, with some details. Moreover, we show how similar arguments can be used to obtain global comparison results, which is our main objective.

\subsection{Complexes of $\cD_{\CC,0}$-modules with constructible cohomology}\label{subsec:hrconst}
Let $Y$ be a complex analytic set and let $\cF$ be a sheaf of left $\cD_{\CC,0}$-modules, where $\cD_{\CC,0}=\CC\{t\}\langle\partial_t\rangle$ is the Noetherian ring of differential operators with coefficients in $\CC\{t\}$. We say that~$\cF$ is \emph{constructible} if it is locally constant on each stratum of some complex analytic stratification of $Y$, and for each $y\in Y$, $\cF_y$ is a $\cD_{\CC,0}$-module of finite type. We will be mainly concerned with the case where each $\cF_y$ is a regular holonomic $\cD_{\CC,0}$-module, in which case we say that $\cF$ is a \emph{constructible sheaf of regular holonomic $\cD_{\CC,0}$-modules}. We denote by $V^\cbbullet\cD_{\CC,0}$ the decreasing filtration indexed by $\ZZ$ such that $V^0\cD_{\CC,0}=\CC\{t\}\langle t\partial_t\rangle$, $V^k\cD_{\CC,0}=t^kV^0\cD_{\CC,0}$ and $V^{-k}\cD_{\CC,0}=\sum_{j=0}^k\partial_t^kV^0\cD_{\CC,0}$ for $k\geq0$.

Let $\cF$ be such a sheaf, and let $\cY=(Y_a)_{a\in A}$ be a stratification with respect to which it is constructible. Then $\cF$ comes equipped with a canonical (decreasing) filtration indexed by $\ZZ$, called the \emph{Kashiwara-Malgrange filtration}, that we denote by $V^\cbbullet\cF$, characterized by the following properties:
\begin{itemize}
\item
$V^k\cF=t^kV^0\cF$ if $k\geq0$,
\item
$V^{-k-1}\cF=V^{-1}\cF+\cdots+\partial_t^kV^{-1}\cF$ if $k\geq0$,
\item
for each $a\in A$, each $V^k\cF_{|Y_a}$ is a locally constant sheaf of $\CC\{t\}$-modules of finite type, which is stable under the action of $t\partial_t$ (it is enough to check this property for $V^{-1}\cF_{|Y_a}$, according to the previous point),
\item
for each $a\in A$, each $\gr_V^k\cF_{|Y_a}$ is a locally constant sheaf of finite dimensional $\CC$-vector spaces and, locally on $Y$, there exists a nonzero polynomial $b(s)\in\CC[s]$ with roots having their real part in $[0,1)$ such that $b(t\partial_t-k)$ vanishes on $\gr_V^k\cF$.
\end{itemize}

We denote by $D^b_{\Cc}(Y,\cD_{\CC,0})$ (\resp $D^b_{\Cc,\hr}(Y,\cD_{\CC,0})$ the bounded derived category of sheaves of $\cD_{\CC,0}$-modules with constructible cohomology (\resp regular holonomic constructible cohomology).

\begin{proposition}[\cf proof of Prop\ptbl3.4 in \cite{Kapranov91}]\label{prop:Dconst}
Let $\cF$ be an object of $D^b_{\Cc,\hr}(Y,\cD_{\CC,0})$ and let $g:Y\to Z$ be a holomorphic map between complex analytic set. Assume that $\Supp\cF^\cbbullet$ is an analytic Zariski open set in a an analytic space $\ov Y$ and that $g_{|\Supp\cF^\cbbullet}$ extends as a proper map $g_{\ov Y}:\ov Y\to Z$. Then $\bR g_*\cF$ is an object of $D^b_{\Cc,\hr}(Z,\cD_{\CC,0})$.
\end{proposition}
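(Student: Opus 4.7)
The plan is to factor $g$ as $g=g_{\ov Y}\circ j$, where $j:\Supp\cF^\cbbullet\hto\ov Y$ is the open inclusion and $g_{\ov Y}$ is the given proper extension, so that $\bR g_*\cF^\cbbullet\simeq\bR g_{\ov Y,*}\bR j_*\cF^\cbbullet$, and to treat the two functors separately. Throughout, the $\cD_{\CC,0}$-structure is preserved because the operators $t,\partial_t$ act externally to the spaces $Y$, $\ov Y$, $Z$ and therefore commute with both $\bR j_*$ and $\bR g_{\ov Y,*}$ at the level of complexes of sheaves.

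First, I would fix a complex analytic Whitney stratification $\cY$ of $\ov Y$ such that $\Supp\cF^\cbbullet$ is a union of open strata and each step $V^k$ of the Kashiwara--Malgrange filtration on the cohomology sheaves of $\cF^\cbbullet$ is locally constant along $\cY|_{\Supp\cF^\cbbullet}$, together with a Whitney stratification $\cZ$ of $Z$ with respect to which $g_{\ov Y}$ is stratified. By Thom's first isotopy lemma, above each $Z_b\in\cZ$ the map $g_{\ov Y}$ is then a locally trivial topological fibration with compact, stratified fibres.

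For $\bR j_*\cF^\cbbullet$ on $\ov Y$, I would take $V^k\bR j_*\cF^\cbbullet\defin\bR j_*V^k\cF^\cbbullet$ as candidate filtration; arguing as in Proposition~\ref{prop:compdirlim} (local conical structure of Whitney stratifications), applying $\bR j_*$ to the short exact sequences $0\to V^{k+1}\cF^\cbbullet\to V^k\cF^\cbbullet\to\gr_V^k\cF^\cbbullet\to0$ yields the identification $\gr_V^k\bR j_*\cF^\cbbullet\simeq\bR j_*\gr_V^k\cF^\cbbullet$, and the $b$-function annihilating $\gr_V^k\cF^\cbbullet$ on $\Supp\cF^\cbbullet$ continues to annihilate it on $\ov Y$ by functoriality of $t\partial_t-k$. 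For $\bR g_{\ov Y,*}$, the stalk at $z\in Z_b$ is the hypercohomology of the compact fibre $g_{\ov Y}^{-1}(z)$ with coefficients in $\bR j_*\cF^\cbbullet$; local triviality on $Z_b$ gives $\cZ$-constructibility of the cohomology sheaves, while compactness together with $\CC$-constructibility of each $\gr_V^k\bR j_*\cF^\cbbullet$ gives that $(\gr_V^k\bR g_{\ov Y,*}\bR j_*\cF^\cbbullet)_z$ is finite dimensional. Finite type over $\cD_{\CC,0}$ at each stalk then follows from the eventual stabilisation $V^k=t^{k-k_0}V^{k_0}$ for $k\geq k_0$, and regular holonomicity from the inherited $b$-function relation.

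The main obstacle will be rigorously establishing the compatibility $\gr_V^k\bR g_*\simeq\bR g_*\gr_V^k$ in this setting: only then does the candidate filtration $\bR g_*V^\cbbullet\cF^\cbbullet$ compute the Kashiwara--Malgrange filtration on $\bR g_*\cF^\cbbullet$ and produce the required finite type and $b$-function properties. This compatibility is where the properness of $g_{\ov Y}$ and the stratified structure enter crucially, in the spirit of \cite{MSaito86,MSaito94,Kapranov91}; once it is in hand, verifying the characterising properties of the Kashiwara--Malgrange filtration on $\bR g_*\cF^\cbbullet$ is a routine stalkwise check.
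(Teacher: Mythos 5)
Your plan---factoring $g=g_{\ov Y}\circ j$, pushing forward the candidate filtration $V^\cbbullet$, and using Whitney stratifications and properness for constructibility---matches the framework of the paper, but there is a genuine gap exactly where you flag ``the main obstacle.'' The crux is not merely the formal identity $\gr_V^k\bR g_*\simeq\bR g_*\gr_V^k$ in the derived category (which the distinguished triangles give for free) but the \emph{strictness} of the pushed-forward filtration: one must show that $R^ig_*V^k\cF\to R^ig_*\cF$ is injective for every $i,k$, for otherwise the sheaves $R^ig_*V^k\cF$ do not even define a filtration of $R^ig_*\cF$, let alone the Kashiwara--Malgrange one. Saying that this is ``a routine stalkwise check once the compatibility is in hand'' inverts the logic: the compatibility \emph{is} the hard stalkwise check, and your proposal supplies no argument for it.

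The paper closes this gap via Lemma~\ref{lem:imdirmono}, the filtered-complex result of \cite[\S3.3]{MSaito86}, applied to $(g_*\God^\cbbullet\cF,\,g_*\God^\cbbullet V^\cbbullet\cF)$ built from the Godement resolution. Its four hypotheses---uniform $b$-function on the graded pieces over $g^{-1}(K)$ by properness, eventual bijectivity of $t$ on high $V$-steps, $t$-torsion control, and boundedness from the finite cohomological dimension of $g$---are what yield both the injectivity and $\gr_V^kR^ig_*\cF=R^ig_*\gr_V^k\cF$. The $t$-torsion hypothesis, in particular, is established by first proving separately (Lemma~\ref{lem:Dconst}\eqref{lem:Dconst2}) that each $R^ig_*V^k\cF$ is a constructible sheaf of coherent $\CC\{t\}$-modules, via \cite[Prop\ptbl8.5.7(b)]{K-S90} applied to $\bR j_!V^k\cF$ together with Poincar\'e--Verdier duality; the local conical-structure argument you invoke is used in the paper only to identify $R^ig_*\cF$ with $\varinjlim_kR^ig_*V^k\cF$. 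Note also that the paper first reduces the derived-category statement to that of a single regular holonomic constructible sheaf via the Leray spectral sequence, using that such sheaves form an abelian category; working directly with the filtered complex as you propose is feasible but makes the torsion condition of Lemma~\ref{lem:imdirmono} harder to verify.
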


The assumption on $\Supp\cF^\cbbullet$ means that $\ov Y\moins\Supp\cF^\cbbullet$ is a closed analytic subset of~$\ov Y$. Proposition \ref{prop:Dconst} does not immediately follow from standard results on construc\-tible complexes since $\cD_{\CC,0}$ is not commutative. We first start with a more precise result on constructible sheaves.

\begin{lemme}\label{lem:Dconst}
Let $\cF$ be a regular holonomic constructible sheaf on $Y$ and assume $g$ extends to a proper map as in Proposition \ref{prop:Dconst}. Then
\begin{enumerate}
\item\label{lem:Dconst1}
each $R^ig_*\cF$ is a regular holonomic constructible sheaf on $Z$,
\item\label{lem:Dconst2}
for each $i,k$, each $R^ig_*V^k\cF$ is a constructible sheaf of $\CC\{t\}$-modules on $Z$,
\item\label{lem:Dconst3}
for each $i,k$, the natural morphism $R^ig_*V^k\cF\to R^ig_*\cF$ is an inclusion, whose image is equal to $V^kR^ig_*\cF$.
\end{enumerate}
\end{lemme}

\begin{proof}
We can apply standard results on constructible sheaves of $A$-modules, with $A=\CC\{t\}$, which has finite global homological dimension, being local and regular (\cf \cite[Th\ptbl9, p\ptbl103]{Serre65}). Then \ref{lem:Dconst}\eqref{lem:Dconst2} follows from \cite[Prop\ptbl8.5.7(b)]{K-S90}: indeed, if $j:\Supp\cF^\cbbullet\hto\nobreak \ov Y$ denotes the inclusion, then $\bR j_*V^k\cF^\cbbullet$ has $\CC\{t\}$-constructible cohomology, since clearly $\bR j_!V^k\cF^\cbbullet$ has so and since duality preserves constructibility; then $\bR g_*V^k\cF^\cbbullet\simeq\bR g_{\ov Y,*}\bR j_*V^k\cF^\cbbullet$ also has $\CC\{t\}$-constructible cohomology. In order to get \ref{lem:Dconst}\eqref{lem:Dconst3} we will restrict to the pull-back of a compact set~$K$ of~$Z$ and we will apply the following lemma.

\begin{lemme}[{\cite[\S3.3]{MSaito86}, see also \cite[Prop\ptbl3.1.13]{Bibi01c}}]\label{lem:imdirmono}
Let $(\cN^\cbbullet,U^\cbbullet\cN^\cbbullet)$ be a $V$-filtered complex of sheaves of $\cD_{\CC,0}$-modules on $Y$ (\ie $V^\ell\cD_{\CC,0}\cdot U^k\cN^i\subset U^{k+\ell}\cN^i$ for all $i,k,\ell$), with the following properties:
\begin{enumerate}
\item\label{lem:imdirmono1}
locally on $Y$, there exists a nonzero polynomial $b(s)\in\CC[s]$, with roots having real part in $[0,1)$, such that $b(t\partial_t-k)$ vanishes on $\gr_U^k\cN^i$ for each $i$ and $k$,
\item\label{lem:imdirmono2}
there exists $k_0$ such that, for all $k\geq k_0$ and all $i$, the left multiplication by $t$ induces an isomorphism $t:U^k\cN^i\to U^{k+1}\cN^i$ of $\CC\{t\}$-modules,
\item\label{lem:imdirmono3}
there exists $k_0$ such that, for all $k\geq k_0$, all $i$ and all $y\in Y$, the $t$-torsion submodule $\cT^{i,k}_y\subset\cH^i(U^k\cN^\cbbullet)_y$ satisfies $\cT^{i,k}_y\cap t^\ell\cH^i(U^k\cN^\cbbullet)_y=0$ for some $\ell\geq0$,
\item\label{lem:imdirmono4}
there exists $i_0$ such that, for all $i\geq i_0$ and any $k$, one has $\cH^i(U^k\cN^\cbbullet)=0$.
\end{enumerate}
Then, for any $i,k$, the natural morphism $\cH^i(U^k\cN^\cbbullet)\to\cH^i(\cN^\cbbullet)$ is injective and its image defines a filtration $U^k\cH^i(\cN^\cbbullet)$ satisfying $\gr_U^k\cH^i(\cN^\cbbullet)=\cH^i(\gr_U^k\cN^\cbbullet)$.\qed
\end{lemme}

We notice that the properties of the lemma hold for $(\cF,V^\cbbullet\cF)_{|g^{-1}(K)}$: indeed, \ref{lem:imdirmono}\eqref{lem:imdirmono2} holds with $k_0=0$, and this implies that \ref{lem:imdirmono}\eqref{lem:imdirmono3} also holds for $k_0=0$ and $\ell=0$. To obtain \ref{lem:imdirmono}\eqref{lem:imdirmono1}, let us note that $\gr^k_V\cF$ is constructible with respect to the same stratification as $\cF$ is, and that the minimal polynomial $b$ of the action of $t\partial_t$ is constant on the connected strata. By properness of $g_{\ov Y}$, \ref{lem:imdirmono}\eqref{lem:imdirmono1} holds on $g^{-1}(K)$.

Let $\God^\cbbullet\cF$ be the standard Godement flabby resolution of $\cF$, which is filtered by $\God^\cbbullet V^k\cF$ (\cf\cite[p\ptbl167]{Godement64}). We claim that the properties of the lemma also hold for $(g_*\God^\cbbullet\cF,g_*\God^\cbbullet V^\cbbullet\cF)$. This is clear for \ref{lem:imdirmono}\eqref{lem:imdirmono2}, \ref{lem:imdirmono}\eqref{lem:imdirmono1} follows from the exactness of $\God^\bbullet$ (\cf\cite[p\ptbl168]{Godement64}) and \ref{lem:imdirmono}\eqref{lem:imdirmono1} for $(\cF,V^\cbbullet\cF)$, and \ref{lem:imdirmono}\eqref{lem:imdirmono4} from the finite homological dimension of $g$. We also notice that \ref{lem:imdirmono}\eqref{lem:imdirmono3} holds if $\cH^i(U^k\cN^\cbbullet)$ is locally of finite type over $\CC\{t\}$. Since we already know that $R^ig_*V^k\cF=\cH^i(g_*\God^\cbbullet V^k\cF)$ has finite type, we obtain \ref{lem:imdirmono}\eqref{lem:imdirmono3}.

\enlargethispage{\baselineskip}%
Taking the cohomology of $(g_*\God^\cbbullet\cF,g_*\God^\cbbullet V^\cbbullet\cF)$ and applying the conclusion Lemma \ref{lem:imdirmono} gives \ref{lem:Dconst}\eqref{lem:Dconst3}.

In order to get \ref{lem:Dconst}\eqref{lem:Dconst1}, we first claim that
\[
R^ig_*\cF=\varinjlim_kR^ig_*V^k\cF=\bigcup_kR^ig_*V^k\cF.
\]
If $g$ is proper, this is obtained by applying the conclusion of Lemma \ref{lem:imdirmono} and using that $\bigcup_kV^k\cF=\cF$. If $g$ extends to a proper map $g_{\ov Y}$ as in Proposition \ref{prop:Dconst}, we set $g=g_{\ov Y}\circ j$, and argue for $Rj_*$ as in the second proof of Proposition \ref{prop:compdirlim}.

Moreover, $R^ig_*V^{-k-1}\cF=R^ig_*V^{-1}\cF+\cdots+\partial_t^kR^ig_*V^{-1}\cF$, where the sum is taken in $R^ig_*\cF$, and $R^ig_*V^{-1}\cF$ has finite type over $\CC\{t\}$, hence $R^ig_*\cF$ has finite type over $\cD_{\CC,0}$. Let us now check that there exists a complex analytic stratification $\cZ$ of $Z$ such that $R^ig_*V^k\cF$ is $\cZ$-constructible for each $k$. Let us choose a stratification $\cZ$ such that $R^ig_*V^k\cF$ is $\cZ$-constructible for $k=-1,0$. Then, for $k\geq0$, $t^k:R^ig_*V^0\cF\to R^ig_*V^k\cF$ is an isomorphism, and so $R^ig_*V^k\cF$ is $\cZ$-constructible. Similarly, $R^ig_*\gr_V^{-1}\cF=\gr_V^{-1}R^ig_*\cF$ is $\cZ$-constructible, and so is $R^ig_*\gr_V^k\cF=\gr_V^kR^ig_*\cF$ for each $k\leq-1$ since $\partial_t^{-k-1}:\gr_V^{-1}R^ig_*\cF\to\gr_V^kR^ig_*\cF$ is an isomorphism. Since the category of local systems of $\CC\{t\}$-modules is stable by extensions in the category of sheaves of $\CC\{t\}$-modules and stable by inductive limits, so is the category of $\cZ$-constructible sheaves of $\CC\{t\}$-modules, and we conclude that $V^kR^ig_*\cF=R^ig_*V^k\cF$ ($k\leq-1$) are $\cZ$-constructible, as well as their inductive limit $R^ig_*\cF$, hence \ref{lem:Dconst}\eqref{lem:Dconst1}.
\end{proof}

\begin{proof}[Proof of Proposition \ref{prop:Dconst}]
One reduces to the case of a regular holonomic constructible sheaf by using the Leray spectral sequence for $g$ and the fact that any morphism between such sheaves has kernel and cokernel in the same category. The result follows then from Lemma \ref{lem:Dconst}\eqref{lem:Dconst1}.
\end{proof}

\begin{corollaire}\label{cor:Dconst}
Let $(\cF^\cbbullet,U^\cbbullet\cF^\cbbullet)$ be a bounded $V$-filtered complex satisfying the following assumptions:
\begin{enumerate}
\item\label{cor:Dconst1}
$\cF^\cbbullet$ is an object of $D^b_{\Cc,\hr}(Y,\cD_{\CC,0})$,
\item\label{cor:Dconst2}
$t:U^k\cF^i\to U^{k+1}\cF^i$ is an isomorphism of $\CC\{t\}$-modules for each $k\geq0$ and each $i$,
\item\label{cor:Dconst3}
$U^{-k-1}\cF^i=\sum_{\ell=0}^k\partial_t^\ell U^{-1}\cF^i$ for each $k\geq0$ and each $i$,
\item\label{cor:Dconst4}
there exists a nonzero polynomial $b(s)\in\CC[s]$ with roots having their real part in $[0,1)$ such that $b(t\partial_t-k)$ vanishes on each $\gr_U^k\cF^i$ for each~$i$,
\item\label{cor:Dconst5}
for each $i$, each germ $\cH^i(U^{-1}\cF^\cbbullet)_y$ has finite type over $\CC\{t\}$.
\end{enumerate}
Then for each $i$ and $k$, the natural morphism $\cH^i(U^k\cF^\cbbullet)\to\cH^i(\cF^\cbbullet)$ is injective and has image equal to the Kashiwara-Malgrange filtration $V^k\cH^i(\cF^\cbbullet)$.

Moreover, if $\Supp\cF^\cbbullet$ is an analytic Zariski open set in a compact analytic space then each $\bH^i(Y,\cF^\cbbullet)$ is a regular holonomic $\cD_{\CC,0}$-module and, for each $i$ and $k$, the natural morphism $\bH^i(Y,U^k\cF^\cbbullet)\to\bH^i(Y,\cF^\cbbullet)$ is injective and has image equal to the Kashiwara-Malgrange filtration $V^k\bH^i(Y,\cF^\cbbullet)$.
\end{corollaire}

\begin{proof}
Let us check that $(\cF^\cbbullet,U^\cbbullet\cF^\cbbullet)$ satisfies the assumptions of Lemma \ref{lem:imdirmono}. Clearly, \ref{cor:Dconst}\eqref{cor:Dconst4} implies \ref{lem:imdirmono}\eqref{lem:imdirmono1}, \ref{cor:Dconst}\eqref{cor:Dconst2} implies \ref{lem:imdirmono}\eqref{lem:imdirmono2} and \ref{lem:imdirmono}\eqref{lem:imdirmono4} follows from the boundedness of $\cF^\cbbullet$. Lastly, \ref{cor:Dconst}\eqref{cor:Dconst5} implies \ref{lem:imdirmono}\eqref{lem:imdirmono3} with $k_0=-1$.

As a consequence of Lemma \ref{lem:imdirmono},
\[
\big(\cH^i(\gr_U^k\cF^\cbbullet),\rT=\exp(-2\pi it\partial_t)\big)=\big(\gr_U^k\cH^i(\cF^\cbbullet),\rT=\exp(-2\pi it\partial_t)\big)
\]
for each $i$ and $k$, where $U^\cbbullet\cH^i(\cF^\cbbullet)$ is the naturally induced filtration. Now, \ref{cor:Dconst}\eqref{cor:Dconst1} implies that $\cH^i(\cF^\cbbullet)$ is a regular holonomic constructible sheaf, \ref{cor:Dconst}\eqref{cor:Dconst2} and \eqref{cor:Dconst3} also hold for the induced filtration $U^\cbbullet\cH^i(\cF^\cbbullet)$, and \ref{cor:Dconst}\eqref{cor:Dconst4} and \eqref{cor:Dconst5} imply that, for each $y\in\nobreak Y$, the germ at $y$ of this filtration coincides with the Kashiwara-Malgrange filtration $V^\cbbullet\cH^i(\cF^\cbbullet)_y$. This gives the first assertion. Moreover, $\cH^i(U^k\cF^\cbbullet)$ is then a constructible sheaf of $\CC\{t\}$-modules for each $i$ and $k$, since $V^k\cH^i(\cF^\cbbullet)$ is so.

Similarly, the complex $\Gamma\big(Y,\God^\cbbullet(\cF^\cbbullet,U^\cbbullet\cF^\cbbullet)\big)$ of global sections canonical Godement resolution $\God^\cbbullet(\cF^\cbbullet,U^\cbbullet\cF^\cbbullet)$ also satisfies the assumptions of Corollary \ref{cor:Dconst}: indeed, \ref{cor:Dconst}\eqref{cor:Dconst1} follows from Proposition \ref{prop:Dconst}; \ref{cor:Dconst}\eqref{cor:Dconst5} follows from the $\CC\{t\}$-constructibility of $U^k\cF^\cbbullet$ which has been noticed above; and the other properties are clear. It follows from Proposition \ref{prop:Dconst} applied to the constant map~$g$ that $\bH^i(Y,\cF^\cbbullet)$ is a regular holonomic $\cD_{\CC,0}$-module, and by an argument similar to the one used for $\cH^i(\cF^\cbbullet)$, that $U^\cbbullet\bH^i(Y,\cF^\cbbullet)$ is its Kashiwara-Malgrange filtration.
\end{proof}

Let us denote by $\wt\cD_{\CC,0}$ the ring $\cD_{\CC,0}[1/t]=\CC(\{t\})\langle\partial_t\rangle$, where $\CC(\{t\})$ denotes the field of convergent Laurent series. Then $\wt\cD_{\CC,0}$ contains $\cD_{\CC,0}$ as a subring and is $\cD_{\CC,0}$-flat. We define $\wtRHm$ as $\whRHm$ in \S\ref{sec:intro}, by using $\CC(\{t\})$ instead of $\CC\lpr\hb\rpr$.

\begin{corollaire}\label{cor:Dconsttilde}
With the same assumptions as in Corollary \ref{cor:Dconst}, let us set $\wt\cF^\cbbullet=\wt\cD_{\CC,0}\otimes_{\cD_{\CC,0}}^{\bL}\cF^\cbbullet$. Then, for all $i$,
\bgroup\numstareq
\begin{equation}\label{eq:Dconsttilde*}
\cH^i(\wt\cF^\cbbullet)\simeq\wtRHm\big(\cH^i(\gr_U^0\cF^\cbbullet),\rT=\exp(-2\pi it\partial_t)\big),
\end{equation}
\egroup
and, if $\Supp\cF^\cbbullet$ is Zariski open in a compact analytic space,
\bgroup\numstarstareq
\begin{equation}\label{eq:Dconsttilde**}
\bH^i(Y,\wt\cF^\cbbullet)\simeq\wtRHm\big(\bH^i(Y,\gr_U^0\cF^\cbbullet),\rT=\exp(-2\pi it\partial_t)\big).
\end{equation}
\egroup
\end{corollaire}

\begin{proof}
By $\cD_{\CC,0}$-flatness of $\wt\cD_{\CC,0}$, we have $\cH^i(\wt\cF^\cbbullet)=\wt{\cH^i(\cF^\cbbullet)}$ (\resp $\bH^i(Y,\wt\cF^\cbbullet)=\wt{\bH^i(Y,\cF^\cbbullet)}$). By Corollary \ref{cor:Dconst}, we have $\cH^i(\gr_U^0\cF^\cbbullet)=\gr_V^0\cH^i(\cF^\cbbullet)$ (resp.\ $\bH^i(Y,\gr_U^0\cF^\cbbullet)=\gr_V^0\bH^i(Y,\cF^\cbbullet)$. It is therefore enough to show that, given a regular holonomic $\cD_{\CC,0}$-module $M$, we have $\wt M\simeq\wtRHm(\gr^0_VM,\exp(-2\pi it\partial_t))$, which is well-known.
\end{proof}

Let us denote by $\wh\cE_{\CC,0}$ the ring of germs at $(0;1)$ of formal micro-differential operators (\cf \eg \cite[Chap\ptbl7]{Kashiwara03}). This is ring of formal Laurent series $\sum_{k\leq k_0}a_k(t)\partial_t^k=\sum_{k\leq k_0}\partial_t^kb_k(t)$, where~$a_k,b_k$ are holomorphic in some disc $|t|\leq\epsilon$ of radius $\epsilon$ independent of $k$. The product structure is described in \loccit and $\cD_{\CC,0}$ is a subring of $\wh\cE_{\CC,0}$, making the latter a flat (left and right) module over the former. Recall that the functor $\whRHm$ has been defined in \S\ref{sec:intro}.

\begin{corollaire}\label{cor:Dconsthat}
With the same assumptions as in Corollary \ref{cor:Dconst}, let us set $\wh\cF^\cbbullet=\wh\cE_{\CC,0}\otimes_{\cD_{\CC,0}}^{\bL}\cF^\cbbullet$. Then, for all $i$,
\bgroup\numstareq
\begin{equation}\label{eq:Dconsthat*}
\cH^i(\wh\cF^\cbbullet)\simeq\whRHm\big(\cH^i(\gr_U^{-1}\cF^\cbbullet),\rT=\exp(-2\pi it\partial_t)\big),
\end{equation}
\egroup
and, if $\Supp\cF^\cbbullet$ is Zariski open in a compact analytic space,
\bgroup\numstarstareq
\begin{equation}\label{eq:Dconsthat**}
\bH^i(Y,\wh\cF^\cbbullet)\simeq\whRHm\big(\bH^i(Y,\gr_U^{-1}\cF^\cbbullet),\rT=\exp(-2\pi it\partial_t)\big).
\end{equation}
\egroup
\end{corollaire}

\begin{proof}
As for Corollary \ref{cor:Dconsttilde}, one reduces to showing the isomorphism $\wh M\simeq\whRHm(\gr^{-1}_VM,\exp(-2\pi it\partial_t))$ for a regular holonomic $\cD_{\CC,0}$-module $M$, which is also well-known.
\end{proof}

\begin{remarque}\label{rem:whF}
Notice that, for any object $\cF^\cbbullet$ of $D^b_{\Cc,\hr}(Y,\cD_{\CC,0})$, we also have a natural morphism
\begin{equation}\label{eq:morphismeformel}
\CC\lpr\partial_t^{-1}\rpr\otimes_{\CC[\partial_t]}\cF^\cbbullet\to \wh\cE_{\CC,0}\otimes_{\cD_{\CC,0}}\cF^\cbbullet=\wh\cF^\cbbullet,
\end{equation}
which is a quasi-isomorphism. Indeed, by a flatness argument, it is enough to check that, for any regular holonomic $\cD_{\CC,0}$-module $M$, a similar assertion holds. If~$M$ is supported at the origin, the result is immediate, hence it remains to check this when~$t$ acts in an invertible way on $M$, and by an extension argument, to the case $M=\cD_{\CC,0}/\cD_{\CC,0}(t\partial_t-\alpha)$ with $\reel\alpha\in[-1,0)$, for which the result is easy.

Similarly, one shows that the cohomology sheaves of $\wh\cF^\cbbullet$ are constructible sheaves of $\CC\lpr\partial_t^{-1}\rpr$-vector spaces.
\end{remarque}

\subsection{The complex $\cK_f^{\protect\cbbullet}$}
We now go back to the setting of the beginning of this section. Let $i_f:X\hto X\times\CC$ denote the graph inclusion of $f$, defined by $x\mto(x,t=\nobreak f(x))$. We consider the left $\cD_{X\times\CC}$-module $\cM_f=i_{f,+}\cM$. Firstly, since we work in the analytic setting, we have $\cM\simeq\cL\otimes_\CC\cO_X$ with the trivial connection $1\otimes d$. We then have $\cM_f\simeq\cL\otimes_\CC\cB_f$, with $\cB_f=i_{f,+}\cO_X=\cO_X[\partial_t]\delta(t-f)$ (\cf \eg \cite{MSaito94} for the notation) where the $\cD_{X\times\CC}$ action is locally defined as follows:
\begin{align*}
g(x,t)\cdot\delta(t-f)&=g(x,f(x))\delta(t-f),\\
\partial_{x_i}\cdot\delta(t-f)&=-\frac{\partial f}{\partial x_i}\,\partial_t\delta(t-f).
\end{align*}
Note that $\cM_f$ is supported on the graph of $f$ (identified with $X$ by $i_f$) and is already coherent over the ring $\cR_X\defin\cD_X[t]\langle\partial_t\rangle$.

We will consider the twisted de~Rham complex
\[
\cK_f^\cbbullet:=\DR_{X\times\CC/\CC}(\cM_f)\simeq\cL\otimes_\CC(\Omega_X^\cbbullet[\partial_t],d-df\otimes\partial_t).
\]
Since $\DR_{X\times\CC/\CC}(\cD_{X\times\CC})[n]$ is a resolution of $\cD_{\CC\from X\times\CC}$ as a $(f^{-1}\cD_\CC,\cD_{X\times\CC})$-bimodule by locally free right $\cD_{X\times\CC}$-modules, the complex $\cK_f^\cbbullet[n]$ represents $\cD_{\CC\from X\times\CC}\otimes^{\bL}_{\cD_{X\times\CC}}\nobreak\cM_f$.

If $f$ is smooth then, locally on $X$, $\cK_f^\cbbullet$ has cohomology in degree $1$ at most, and this cohomology is isomorphic to $\cL\otimes_\CC f^{-1}\cO_\CC$ as a $f^{-1}\cD_\CC$-module.

We now focus on the value $t=0$ and we set $X_0=f^{-1}(0)$. We will often forget the restriction map in the notation below. In order to consider other values $t=t_o$, one should simply replace $f$ with $f-t_o$. The sheaf of rings $\cD_{X\times\CC|X_0\times0}$ is equipped with a decreasing $V$-filtration, for which $t$ has order $1$ and $\partial_t$ has order~$-1$, and the operators in the $X$ direction have order zero. In local coordinates,
\begin{align*}
V^0(\cD_{X\times\CC})&=\Big\{\textstyle\sum_{\alpha,k}a_{\alpha,k}(x,t)\partial_x^\alpha(t\partial_t)^k\mid\alpha\in\NN^n,\;k\in\NN,\;a_{\alpha,k}\in\cO_{X\times\CC}\Big\},\\
V^\ell(\cD_{X\times\CC})&=
\begin{cases}
t^\ell V^0(\cD_{X\times\CC})&\text{if }\ell\geq0,\\
\sum_{j=0}^{-\ell}\partial_t^jV^0(\cD_{X\times\CC})&\text{if }\ell\leq0.
\end{cases}
\end{align*}

According to \cite{Kashiwara76}, there exists a unique decreasing filtration $V^\cbbullet(\cM_f)$ indexed by $\ZZ$ such that
\begin{itemize}
\item
each $V^k\cM_f$ is $V^0(\cD_{X\times\CC})$-coherent,
\item
$V^{k+\ell}\cM_f=t^\ell V^j\cM_f$ for $k\geq0$ and $\ell\geq0$,
\item
$V^{-k-\ell}\cM_f=\sum_{j=0}^\ell\partial_t^jV^\alpha\cM_f$ for $k\leq-1$ and $\ell\geq0$,
\item
locally on $X$, there exists a polynomial $b(s)\in\CC[s]$ with roots in $\QQ\cap[0,1)$ such that $b(t\partial_t-k)$ vanishes identically on $\gr_V^k\cM_f$ for each $k$.
\end{itemize}
Note that our convention is shifted by one with respect to that of \cite{MSaito94}, but corresponds to that of \cite[\S1.3]{B-S07}. The uniqueness of such a filtration is not difficult, while the existence and the fact that the roots of $b$ be chosen in $\QQ$ follows from \cite{Kashiwara76}. We call this filtration the \emph{Kashiwara-Malgrange} $V$-filtration of $\cM_f$.

The complex $\cK_f^\cbbullet$ is then filtered by setting $U^k\cK_f^\cbbullet:=\DR_{X\times\CC/\CC}(V^k\cM_f)$, which is meaningful since $V^k\cM_f$ is a $V^0\cD_{X\times\CC}$-module, hence a $\cD_{X\times\CC/\CC}$-module. This makes $(\cK_f^\cbbullet,U^\cbbullet\cK_f^\cbbullet)$ a $V$-filtered complex in the sense used in \S\ref{subsec:hrconst}. We can summarize various known properties of this $V$-filtered complex in the following theorem.

\begin{theoreme}\label{th:KM}\mbox{}
\begin{enumerate}
\item\label{th:KM1}
The complex $\gr_U^0\cK_f^\cbbullet$ (\resp $\gr_U^{-1}\cK_f^\cbbullet$) equipped with the operator $\rT_f=\exp(-2\pi it\partial_t)$ is isomorphic to the nearby cycle complex $\psi_f\cL[-1]$ (\resp $\phi_f\cL[-1]$), equipped with its monodromy.
\item\label{th:KM2}
The $V$-filtered complex $(\cK_f^\cbbullet,U^\cbbullet\cK_f^\cbbullet)$ satisfies all properties of Corollary \ref{cor:Dconst}.
\end{enumerate}
\end{theoreme}

Recall that $(\psi_f\cL,T_f)$ (\resp $(\phi_f\cL,T_f)$) denotes the nearby (\resp vanishing) cycle complex of $f$ (\cf \cite{Deligne73}).

\begin{proof}
The first point is mainly due to Malgrange \cite{Malgrange83a} and applies more generally to any regular holonomic $\cD_X$-module~$\cM$. There are various proofs or approaches of this fact (\cite{Malgrange83a, Kashiwara83,MSaito86,M-S86b,Mebkhout89,L-M95}).

In the second point, \ref{cor:Dconst}\eqref{cor:Dconst2}--\eqref{cor:Dconst4} are easily checked. On the other hand, \ref{cor:Dconst}\eqref{cor:Dconst1} is contained in \cite[Prop\ptbl3.4]{Kapranov91} and \cite[Prop\ptbl1.1]{B-S07}. Let us recall the argument of \cite{B-S07}.
\begin{enumerate}
\item
The same question can be asked for any regular holonomic $\cD_{X\times\CC}$-module $\cN$ such that $\DR_{X\times\CC/\CC}(\cN)$ has regular holonomic cohomology over $f^{-1}\cD_\CC$. Below, we will implicitly restrict sheaf-theoretically the various sheaves or complexes to $X_0$.
\item\label{enum:imdir}
Let $\pi:X'\to X$ be a proper modification with $X'$ smooth and exceptional locus contained in $X_0$, and set $f'=f\circ\pi$. We also denote by $\pi:X'\times\CC\to X\times\CC$ the induced modification. Recall that $\pi_+\cN'=\bR\pi_*(\cD_{X\times\CC\from X'\times\CC}\otimes^{\bL}_{\cD_{X'\times\CC}}\cN')$. Moreover, by using the projection formula one finds
\[
\cD_{\CC\from X\times\CC}\otimes^{\bL}_{\cD_{X\times\CC}}\pi_+\cN'\simeq\bR\pi_*(\cD_{\CC\from X'\times\CC}\otimes^{\bL}_{\cD_{X'\times\CC}}\cN'),
\]
that we write
\begin{equation}\label{eq:modif}
\DR_{X\times\CC/\CC}(\pi_+\cN')\simeq\bR\pi_*\DR_{X'\times\CC/\CC}(\cN'),
\end{equation}
because $\dim X'=\dim X$. As a consequence of Proposition \ref{prop:Dconst}, if \ref{cor:Dconst}\eqref{cor:Dconst1} holds for a regular holonomic $\cD_{X'\times\CC}$-module $\cN'$, then it holds for $\pi_+\cN'$.
\item
The assertion holds for $\cB_{f'}$ if $f^{\prime-1}(0)$ has normal crossings. This follows from a local computation (which will be done in the proof of Lemma \ref{lem:formel} below). From~\eqref{enum:imdir}, it holds for $\pi_+\cB_{f'}$, if $\pi:X'\to X$ as above is chosen such that $f^{\prime-1}(0)$ has normal crossings. Such a $\pi$ exists if we restrict $X$ to the neighbourhood of any singular point in $X_0$.
\item\label{enum:finimdir}
According to the decomposition theorem of~\cite{MSaito86} (or, \cf\cite[Proof of Prop\ptbl1.1]{B-S07}, a simple variant of it using that $\pi$ can be obtained by successive blowing-ups with non-singular centers), $\cO_X$ is a direct summand of $\pi_+\cO_{X'}$ in $D^b(\cD_X)$. As a consequence, working with $\cM=\cO_X$ and $\cM'=\cO_{X'}$, we find that $\cK_f^\cbbullet$ is a direct summand of $\bR\pi_*\cK_{f'}^\cbbullet$ in $D^b(\cD_{\CC,0})$. We immediately conclude that the assertion holds for $\cB_f$.
\item
When $\cM$ is locally $\cO_X$-free with an integrable connection, we note that the question is local on $X$ and we can assume that $\cL=\CC_X$, hence we can apply the previous arguments.
\end{enumerate}

One can argue similarly for \ref{cor:Dconst}\eqref{cor:Dconst5}. One first checks that it holds for $\cB_{f'}$ (notation as above), \cf \cite[Prop\ptbl2.1]{B-S07}.

One can define a sheaf $V^0(\cD_{\CC\from X\times\CC})$ and define $\pi_+$ for $V^0\cD_{X'\times\CC}$-modules. We get a formula similar to \eqref{eq:modif}. Then \ref{cor:Dconst}\eqref{cor:Dconst5} holds with $j_0=0$ for $\cH^0\pi_+\cB_{f'}$. Indeed, one can apply an argument similar to that of the proof of Corollary \ref{cor:Dconst} (at the level of $\cD_{X\times\CC}$-modules) to deduce that the natural morphism
\[
\cH^i\pi_+(V^j\cB_{f'})\to\cH^i\pi_+(\cB_{f'})
\]
is injective and has image the Kashiwara-Malgrange filtration $V^j\cH^i\pi_+(\cB_{f'})$ (\cf \cite[\S3.3]{MSaito86}, \cite[\S4.8]{M-S86b}, \cite[Prop\ptbl9.2.5]{L-M95}, \cite[Th\ptbl3.1.8]{Bibi01c}). In particular, one uses that $\cH^i\pi_+(V^j\cB_{f'})$ is $V^0\cD_{X\times\CC}$-coherent in order to control its $t$-torsion part (\cf\eg\cite[Lem\ptbl 3.1.4]{Bibi01c}).

We also notice that $\cH^i\pi_+\cB_{f'}$ is supported on $X_0$ if $i\neq0$, hence satisfies $V^j\cH^i\pi_+\cB_{f'}=0$ for $j\leq0$. Setting $\cN=\cH^0\pi_+\cB_{f'}$, we thus have
\begin{align*}
U^0\DR_{X\times\CC/\CC}(\cN)&:=\DR_{X\times\CC/\CC}(V^0\cN)\\
&=\DR_{X\times\CC/\CC}(\cH^0\pi_+V^0\cB_{f'})\\
&\simeq\DR_{X\times\CC/\CC}(\pi_+V^0\cB_{f'})\\
&\simeq\bR\pi_*\DR_{X'\times\CC/\CC}(V^0\cB_{f'})\\
&=:\bR\pi_*U^0\DR_{X'\times\CC/\CC}(\cB_{f'}).
\end{align*}
Since the cohomology sheaves of $U^0\DR_{X'\times\CC/\CC}(\cB_{f'})$ are $\CC\{t\}$-constructible and since~$\pi$ is proper, a spectral sequence argument shows that $U^0\cH^i\DR_{X\times\CC/\CC}(\cN)_x$ has finite type over $\CC\{t\}$ for each $x\in X_0$. Since $\cB_f$ is a direct summand of $\cN$, as already noticed above, we obtain \ref{cor:Dconst}\eqref{cor:Dconst5} for $(\cK_f^\cbbullet,U^\cbbullet\cK_f^\cbbullet)$.
\end{proof}

\subsection{The complex $\wt\cK_f^{\protect\cbbullet}$ and the nearby cycles}
Let us now consider the $\cD_X$-module $\cM[1/f]\simeq\cL\otimes_{\cO_X}\cO_X[1/f]$, where $\cO_X[1/f]$ denotes the sheaf of meromorphic functions on $X$ with poles along $X_0$ at most. Let us set $\wt\cK_f^\cbbullet=\DR_{X\times\CC/\CC}(i_{f,+}\cM[1/f])\simeq\cL\otimes(\Omega_X^{\bbullet}[1/f,\partial_t],d-df\otimes\partial_t)$. This is a complex of $\CC(\{t\})$-vector spaces, equipped with a connection. We have $\cH^i(\wt\cK_f^\cbbullet)=\CC(\{t\})\otimes_{\CC\{t\}}\cH^i(\cK_f^\cbbullet)$, in a way compatible with the connections, hence it is a constructible sheaf of $\CC(\{t\})$-vector spaces, according to Theorem \ref{th:KM}\eqref{th:KM2}. As a consequence, if $X_0$ is the complement of a closed analytic subset of a compact analytic space $\ov X_0$ (for example, if $X_0$ is an algebraic variety, as in \S\ref{sec:intro}, by using a Nagata compactification), then the hypercohomology $\bH^i(X_0,\wt\cK_f^\cbbullet)$ is a finite dimensional $\CC(\{t\})$-vector space, equipped with a connection $\nabla_{\partial_t}$. On the other hand, with the same assumption on $X_0$, the complex $\psi_f\cL$ equipped with its monodromy $\rT_f$ gives rise to finite dimensional $\CC$-vector spaces $\bH^i(X_0,\psi_f\cL)$, equipped with an automorphism $\rT_f$, and then to a finite dimensional $\CC(\{t\})$-vector space with connection $\wtRHm\big(\bH^i(X_0,\psi_f\cL),\rT_f\big)$.

\begin{corollaire}\label{cor:rhpsi}
Under the previous assumption on $X_0$, we have for each $i$:
\[
\big(\bH^{i+1}(X_0,\wt\cK_f^\cbbullet),\nabla_{\partial_t}\big)\simeq\wtRHm\big(\bH^i(X_0,\psi_f\cL),\rT_f\big).
\]
\end{corollaire}

Notice that the left-hand term above is nothing but
\[
\Big(\bH^{i+1}\big(X_0,(\Omega_X^\cbbullet[1/f,\partial_t],d-df\otimes\partial_t\big),\nabla_{\partial_t}\Big).
\]

\begin{proof}
According to Theorem \ref{th:KM}\eqref{th:KM2}, we can apply \eqref{eq:Dconsttilde**} to $(\cK_f^\cbbullet,U^\cbbullet\cK_f^\cbbullet)$. The result follows from the identification of $(\gr^0_U\cK_f^\cbbullet,\exp(-2\pi it\partial_t))$ with $(\psi_f\cL[-1],\rT_f)$ given by Theorem \ref{th:KM}\eqref{th:KM1}.
\end{proof}

\subsection{The complex $\wh\cK_f^{\protect\cbbullet}$ and the vanishing cycles}
We denote by $_X\wh\cK_f^\cbbullet$ the complex $\DR_X(\wh\cE_X^{-f/u}\otimes\cM)\simeq\cL\otimes_\CC(\Omega_X^\cbbullet\lpr\hb\rpr,d-df/\hb)$. Our goal is to prove the microlocal version of Corollary \ref{cor:rhpsi}. Recall that $\wh\cK_f^\cbbullet$ denotes $\wh\cE_{\CC,0}\otimes_{\cD_{\CC,0}}^{\bL}\cK_f^\cbbullet\simeq\CC\lpr\partial_t^{-1}\rpr\otimes_{\CC[\partial_t]}\DR_{X\times\CC/\CC}(\cM_f)$, if we identify $\partial_t^{-1}$ with $\hb$.

\begin{proposition}\label{prop:BSK}
Under the previous assumption on $X_0$, we have:
\[
\Big(\bH^{i+1}\big(X_0,\cL\otimes(\Omega^\cbbullet_X\lpr\hb\rpr,d-df/u)\big),\nabla_{\partial_u}\Big)\simeq\whRHm\big(\bH^i(X_0,\phi_f\cL),\rT_f\big).
\]
\end{proposition}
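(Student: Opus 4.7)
The plan is to identify the left-hand side with a formal microlocalization of $\cK_f^\cbbullet$ and then to apply \eqref{eq:Dconsthat**} of Corollary \ref{cor:Dconsthat} to the $V$-filtered complex $(\cK_f^\cbbullet,U^\cbbullet\cK_f^\cbbullet)$, whose assumptions have been verified in Theorem \ref{th:KM}\eqref{th:KM2}.

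First I would set up the identification on the left. Recall that $\cK_f^\cbbullet\simeq\cL\otimes_\CC(\Omega_X^\cbbullet[\partial_t],d-df\otimes\partial_t)$ as a complex of $f^{-1}\cD_{\CC,0}$-modules. By Remark \ref{rem:whF} (applied termwise, using that each term is a locally free $\CC[\partial_t]$-module) the natural morphism
\[
\CC\lpr\partial_t^{-1}\rpr\otimes_{\CC[\partial_t]}\cK_f^\cbbullet\to\wh\cK_f^\cbbullet
\]
is a quasi-isomorphism. Substituting $\hb=\partial_t^{-1}$ converts $df\otimes\partial_t$ into $df/\hb$, so
\[
\wh\cK_f^\cbbullet\simeq\cL\otimes_\CC(\Omega_X^\cbbullet\lpr\hb\rpr,d-df/\hb).
\]
Under this identification, the natural left action of $\wh\cE_{\CC,0}$ on $\wh\cK_f^\cbbullet$ becomes the standard action of $\hb$-differential operators: $\partial_t$ acts as multiplication by $\hb^{-1}$ and $t$ acts (through the graph $\cM_f$) as $f-\hb^2\partial_\hb$, so that the derivation $-\partial_t t$ corresponds precisely to $\nabla_{\partial_\hb}=\partial_\hb+f/\hb^2$.

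Next, Theorem \ref{th:KM}\eqref{th:KM2} says $(\cK_f^\cbbullet,U^\cbbullet\cK_f^\cbbullet)$ satisfies the hypotheses of Corollary \ref{cor:Dconst}, hence also those of Corollary \ref{cor:Dconsthat}. The closed analytic set $X_0$ is, by assumption, Zariski open in a compact analytic space, so \eqref{eq:Dconsthat**} applies and yields
\[
\bH^{i+1}(X_0,\wh\cK_f^\cbbullet)\simeq\whRHm\bigl(\bH^{i+1}(X_0,\gr_U^{-1}\cK_f^\cbbullet),\,\rT=\exp(-2\pi it\partial_t)\bigr),
\]
as $\wh\cE_{\CC,0}$-modules, i.e.\ as $\CC\lpr\hb\rpr$-modules with connection, by the compatibility sketched above. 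Finally, Theorem \ref{th:KM}\eqref{th:KM1} identifies $(\gr_U^{-1}\cK_f^\cbbullet,\exp(-2\pi it\partial_t))$ with $(\phi_f\cL[-1],\rT_f)$; the shift by $-1$ absorbs the degree shift, giving $\bH^{i+1}(X_0,\gr_U^{-1}\cK_f^\cbbullet)=\bH^i(X_0,\phi_f\cL)$ with monodromy $\rT_f$, and the two displays combine into the asserted isomorphism.

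The main obstacle here is not the chain of isomorphisms itself, which is essentially a bookkeeping exercise once all previous results are in place, but rather the compatibility of the connections: one must verify that the microlocal structure used in Corollary \ref{cor:Dconsthat} (which is phrased intrinsically in terms of $\wh\cE_{\CC,0}$) matches, under the identification $\hb=\partial_t^{-1}$, the \emph{a priori} different-looking connection $\nabla_{\partial_\hb}=\partial_\hb+f/\hb^2$ on the formal twisted de~Rham complex. This is done by checking that, on the module $\cM_f$, the action of $t$ corresponds to $f-\hb^2\partial_\hb$ under the identification and propagating this through the definition of $\whRHm$.
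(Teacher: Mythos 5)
There is a real gap at the very first step of your argument, and it is in fact the crux of the whole proof. You write that Remark \ref{rem:whF} gives a quasi-isomorphism $\CC\lpr\partial_t^{-1}\rpr\otimes_{\CC[\partial_t]}\cK_f^\cbbullet\to\wh\cK_f^\cbbullet$ (this part is fine, since $\cK_f^\cbbullet\in D^b_{\Cc,\hr}(X_0,\cD_{\CC,0})$ by Theorem \ref{th:KM}), and you then assert that ``substituting $\hb=\partial_t^{-1}$'' identifies $\wh\cK_f^\cbbullet$ with $\cL\otimes(\Omega_X^\cbbullet\lpr\hb\rpr,d-df/\hb)$. But these are \emph{not} the same complex termwise, and it is not a substitution: the term $\CC\lpr\partial_t^{-1}\rpr\otimes_{\CC[\partial_t]}(\cL\otimes_\CC\Omega_X^\ell[\partial_t])$ is the ``algebraic'' sheaf $\cL\otimes_\CC\Omega_X^\ell\otimes_\CC\CC\lpr\hb\rpr$, in which every local section is a finite $\CC\lpr\hb\rpr$-linear combination of sections of $\Omega_X^\ell$, whereas the sheaf $\Omega_X^\ell\lpr\hb\rpr$ appearing in $_X\wh\cK_f^\cbbullet$ consists of formal Laurent series $\sum_{n\geq n_0}\omega_n\hb^n$ whose coefficients $\omega_n$ are arbitrary, independent sections of $\Omega_X^\ell$. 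The former is a strict subsheaf of the latter.

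The natural map between these two complexes is exactly the morphism \eqref{eq:morphismemicros} in the paper, which sends $\big(\sum_{k\leq k_o}\partial_t^kb_k(t)\big)\otimes\omega$ to $\big(\sum_{k\leq k_o}b_k(f)\hb^{-k}\big)\otimes\omega$, and the assertion that it is a quasi-isomorphism is Lemma \ref{lem:formel}. That lemma is not formal; it is the heart of \S\ref{sec:andR}, proved first when $f^{-1}(0)$ has normal crossings by an explicit logarithmic Koszul computation (reducing, after killing $\hb x_1\nabla^f_{x_1}$, to a sub-cube with zero arrows), and then in general via an embedded resolution $\pi:X'\to X$, the projection formula, Proposition \ref{prop:imdiru}, and the decomposition theorem to realize $\cO_X$ as a direct summand of $\pi_+\cO_{X'}$. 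The paper also notes that Kapranov used the statement of this lemma without proof; supplying the proof is one of the contributions of this paper.

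Once you accept Lemma \ref{lem:formel}, the rest of your argument matches the paper: identifying the left side of the Proposition with $\bH^{i+1}(X_0,{}_X\wh\cK_f^\cbbullet)$, invoking Theorem \ref{th:KM}\eqref{th:KM2} to apply \eqref{eq:Dconsthat**} to $(\cK_f^\cbbullet,U^\cbbullet\cK_f^\cbbullet)$, and then using Theorem \ref{th:KM}\eqref{th:KM1} to identify $\gr_U^{-1}\cK_f^\cbbullet$ with $\phi_f\cL[-1]$. The compatibility-of-connections issue you flag at the end is real but standard and is absorbed into the definition of $\whRHm$ and the statement $\wh M\simeq\whRHm(\gr_V^{-1}M,\exp(-2\pi it\partial_t))$ used in the proof of Corollary \ref{cor:Dconsthat}. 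The genuine missing ingredient in your write-up is Lemma \ref{lem:formel}.
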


\begin{proof}
By definition, the left-hand term is identified with $\bH^{i+1}(X_0,{}_X\wh\cK_f^\cbbullet)$, and an argument similar to that of Corollary \ref{cor:rhpsi}, by using Corollary \ref{cor:Dconsthat} instead of \ref{cor:Dconsttilde}, identifies the right-hand term with $\bH^{i+1}(X_0,\wh\cK_f^\cbbullet)$. The point is then to compare $_X\wh\cK_f^\cbbullet$ and $\wh\cK_f^\cbbullet$. We have a natural morphism
\begin{equation}\label{eq:morphismemicros}
\wh\cK_f^\cbbullet:=\wh\cE_{\CC,0} \otimes_{\cD_{\CC,0}}\cK^\cbbullet_f\to{}_X\wh\cK_f^\cbbullet,
\end{equation}
by sending $\big(\sum_{k\leq k_o}\partial_t^kb_k(t)\big)\otimes\omega$ to $\big(\sum_{k\leq k_o}b_k(f)\hb^{-k}\big)\otimes\omega$. The proposition is now a consequence of Lemma \ref{lem:formel} below.
\end{proof}

\begin{remarque}
The cohomology sheaves of $_X\wh\cK_f^\cbbullet$ are supported on the critical locus of~$f$. Indeed, assume that $f=x_1$ in some local system of coordinates of~$X$. The complex $_X\wh\cK_f^\cbbullet$ is the simple complex associated to the cube with vertices $\cO_X\lpr\hb\rpr$ and arrows $\partial_{x_i}-\partial_{x_i}(f)\hbm$. It is thus locally isomorphic to the complex
\[
\CC\{x_1\}\lpr\hb\rpr\To{u\partial_{x_1}-1}\CC\{x_1\}\lpr\hb\rpr,
\]
hence clearly quasi-isomorphic to zero.
\end{remarque}

\begin{lemme}\label{lem:formel}
The natural morphism \eqref{eq:morphismemicros} is a quasi-isomorphism.
\end{lemme}

\begin{remarque}
It follows from this lemma and from Remark \ref{rem:whF} that $_X\wh\cK_f^\cbbullet$ is a constructible sheaf of $\CC\lpr\hb\rpr$-vector spaces on $X_0$. This was already obtained in \cite[Prop\ptbl3.9]{Kapranov91} by using the statement of Lemma \ref{lem:formel}, without proof however.

Notice also that Lemma \ref{lem:formel} together with \eqref{eq:Dconsthat*} (according to Theorem \ref{th:KM}\eqref{th:KM2}) gives $\cH^{i+1}(\wh\cK_f)\simeq\whRHm(\cH^i\phi_f\cL,\rT_f)$, which was obtained in \cite[Th\ptbl2.4(a)]{Kapranov91} only after a suitable grading.
\end{remarque}

\begin{proof}[Proof of Lemma \ref{lem:formel}: the normal crossing case]
Let us start with the case where $f$ is a monomial, in local coordinates, so that $X_0$ is a divisor with normal crossings.

The corresponding logarithmic complexes $\cK_f(\log X_0)^\cbbullet$ and $_X\wh{\cK_f(\log X_0)}{}^\cbbullet$ are naturally filtered by the weight filtration (number of polar divisors) $W_\ell$, with $W_0=\cK_f^\cbbullet$ or $_X\wh\cK_f^\cbbullet$. It is therefore enough to show the desired statement for the logarithmic complexes and for each $\gr_\ell^W$ with $\ell\geq1$. Let $f=x^\mu$, with $\mu_1,\dots,\mu_r\geq1$, $1\leq r\leq n$. We will prove the quasi-isomorphism at the origin of coordinates.

Let us start with $\gr_\ell^W$, with $\ell\geq1$. Then $df\otimes\partial_t$ or $df\hbm$ induces zero on $\gr_\ell^W$, so we find, when setting $X_{0,I}=\bigcap_{i\in I}\{x_i=0\}$,
\begin{align*}
\gr_\ell^W(\cK_f(\log X_0)^\cbbullet)&=\bigoplus_{|I|=\ell}\CC[\partial_t]\otimes_\CC\DR(\cO_{X_{0,I}}),\\
\gr_\ell^W({}_X\wh{\cK_f(\log X_0)}{}^\cbbullet)&=\bigoplus_{|I|=\ell}\DR(\cO_{X_{0,I}}\lpr\hb\rpr),
\end{align*}
and it is easy to check that \eqref{eq:morphismemicros} is an isomorphism in this case.

It is now enough to prove the isomorphism for the germs at each point $x_o$ of $\{x_1=\cdots=x_r=0\}$ of the logarithmic complexes. We will assume that $x_{r+1}(x_o)=\cdots=x_n(x_o)=0$ for the sake of simplicity. We will identify the quotient module $\cO_{X,0}/x^\mu$ with the module
\[
M=\bigoplus_{\emptyset\neq I\subset\{1,\dots,r\}}\bigoplus_{\ell_I}M_{I,\ell_I},\quad M_{I,\ell_I}\defin x^{\ell_I}x^{\mu_{I^c}}\cO_{X_I,0},
\]
where $X_I=\{x_i=0\mid\forall i\in I\}$, $x^{\mu_{I^c}}=\prod_{i\notin I}x_i^{\mu_i}$ and, for each $i\in I$, $\ell_i$ varies in $\{0,\dots,\mu_i-1\}$. Let us set $d=\gcd(\mu_1,\dots,\mu_r)$ and $\mu'=\mu/d$. We will use that the simple complex associated with the $(n-1)$-cube having vertices~$M$ and differentials $x_i\partial_{x_i}-(\mu_i/\mu_1)x_1\partial_{x_1}$ if $i=2,\dots,r$, and $\partial_{x_i}$ if $i\geq r+1$ (\ie the Koszul complex on $M$ with these differentials) is quasi-isomorphic to the subcomplex having vertices $\bigoplus_{j=0}^{d-1}x^{j\mu'}\CC$ and differentials equal to zero. Indeed, if $I\neq\{1,\dots,r\}$, let us choose $i\in\{2,\dots,r\}$ such that one and only one between~$1$ and~$i$ belongs to~$I$. Then $x_i\partial_{x_i}-(\mu_i/\mu_1)x_1\partial_{x_1}$ is bijective on $M_{I,\ell_I}$ for each~$\ell_I$, since it acts either as $x_i\partial_{x_i}+\mu_i(1-\ell_1/\mu_1)$ or as $-(\mu_i/\mu_1)\big(x_1\partial_{x_1}+\nobreak\mu_1(1-\nobreak\ell_i/\mu_i)\big)$ on $\cO_{X_I}$. It remains to prove the assertion for the Koszul complex when $I=\{1,\dots,r\}$, for which it is clear.

The complex $\cK_f(\log X_0)^\cbbullet_0$ is the simple complex associated with the $n$-cube of vertices $\cO_{X,0}[\partial_t]$ and arrows $x_i\partial_{x_i}-\mu_ix^\mu\partial_t$. We notice that the map $x_1\partial_{x_1}-\mu_1x^\mu\partial_t$ is injective, and its cokernel can be identified with $N\defin\cO_{X,0}\oplus\partial_t(\cO_{X,0}/x^\mu)[\partial_t]$. On this cokernel, the action of $x_i\partial_{x_i}-\mu_ix^\mu\partial_t$ ($i\in\{2,\dots,r\}$) is identified with the action induced by $x_i\partial_{x_i}-(\mu_i/\mu_1)x_1\partial_{x_1}$. Therefore, $\cK_f(\log X_0)^\cbbullet_0$ is isomorphic to the simple complex associated with the $(n-1)$-cube with vertices $N$ and arrows $x_i\partial_{x_i}-(\mu_i/\mu_1)x_1\partial_{x_1}$ ($i\in\{2,\dots,r\}$) and $\partial_{x_i}$ ($i\geq r+1$). From the preliminary remark above we deduce that the inclusion of the sub-cube of size $r-1$ having vertices
\[
\CC\{x^{\mu'}\}\oplus\bigoplus_{j=0}^{d-1}\partial_t\CC[\partial_t]\cdot x^{j\mu'}=\bigoplus_{j=0}^{d-1}\Big(\CC\{x^\mu\}\oplus\partial_t\CC[\partial_t]\Big)\cdot x^{j\mu'},
\]
and induced arrows equal to zero, is a quasi-isomorphism. Let us set $e_j=x^{j\mu'}$ for $j=0,\dots,d-1$. The induced action of $\partial_t$ sends $x^{k\mu}e_j$ with $k\geq 1$ to $(k+j/d)x^{(k-1)\mu}$, and~$e_j$ to $\partial_t\cdot e_j$. For $g\in\CC\{t\}$, we also have $g(t)e_j=g(x^\mu)e_j$, and the value of $g(t)\partial_t^\ell e_j$ is computed with the previous ones, according to the standard commutation relations. One checks that $e_j$ satisfies $(t\partial_t+1-j/d)e_j=0$, and that $\big(\CC\{x^\mu\}\oplus\partial_t\CC[\partial_t]\big)\cdot e_j\simeq\cD_{\CC,0}/\cD_{\CC,0}(t\partial_t+1-j/d)$. Since $\wh\cE_{\CC,0}$ is flat over $\cD_{\CC,0}$, the same reasoning applies to $\wh{\cK_f^{\cbbullet}(\log X_0)}_0$, which is thus quasi-isomorphic to the subcomplex having vertices $\bigoplus_{j=0}^{d-1}[\wh\cE_{\CC,0}/\wh\cE_{\CC,0}(t\partial_t+1-j/d)]e_j$ and arrows equal to zero. We finally notice that each term in the sum is written as $\CC\lpr\partial_t^{-1}\rpr\cdot e_j$, so that $\wh{\cK_f^{\cbbullet}(\log X_0)}_0$ is quasi-isomorphic to the subcomplex having vertices $\bigoplus_{j=0}^{d-1}\CC\lpr\partial_t^{-1}\rpr\cdot e_j$ and arrows equal to zero.

A similar computation can be done for $_X\wh{\cK_f(\log X_0)}{}^\cbbullet$. It is isomorphic to the simple complex associated to the $n$-cube with vertices $\cO_X\lpr\hb\rpr$ and arrows as above. We can replace each arrow indexed by $i\in\{2,\dots,r\}$ with the linear combination $x_i\partial_{x_i}-(\mu_i/\mu_1)x_1\partial_{x_1}$, and get a complex isomorphic to the original one. Moreover, we can also replace $x_1\partial_{x_1}-\mu_1x^\mu\hbm$ with $\hb x_1\partial_{x_1}-\mu_1x^\mu$. In such a way, our complex is obtained by $\CC\lpr\hb\rpr\otimes_{\CC\lcr\hb\rcr}$ from the $n$-cube with vertices $\cO_X\lcr\hb\rcr$ and similar arrows. We will work with this complex, that we denote by $K^\cbbullet$, and tensor with $\CC\lpr\hb\rpr$ at the very end. The germ $K^\cbbullet_0$ at the origin is equal to $\varinjlim_{U\supset 0}\Gamma(U,K^\cbbullet)$, where $U$ varies in a fundamental system of polycylinders neighbouring the origin.

For any such polycylinder $U$, each arrow $\hb x_1\partial_{x_1}-\mu_1x^\mu$ is injective on the vertex $\cO(U)\lcr\hb\rcr$, and the cokernel is identified with $\wh M\defin(\cO(U)/x^\mu)\lcr\hb\rcr$. The $(n-1)$-cube with vertices all equal to $\wh M$ and arrows $x_i\partial_{x_i}-(\mu_i/\mu_1)x_1\partial_{x_1}$ ($i=2,\dots,r$) and $\partial_{x_i}$ ($i\geq r+1$) is quasi-isomorphic to the sub-cube of size $r-1$ having vertices
\[
\bigoplus_{j=0}^{d-1}\CC\lcr\hb\rcr\cdot x^{j\mu'},
\]
and induced arrows equal to zero. Therefore, $K^\cbbullet_0$ is isomorphic to the same complex. Lastly, $_X\wh{\cK_f(\log X_0)}{}^\cbbullet_0$ is isomorphic to $\CC\lpr\hb\rpr\otimes_{\CC\lcr\hb\rcr}K^\cbbullet_0$, hence to the subcomplex having vertices $\bigoplus_{j=0}^{d-1}\CC\lpr\hb\rpr\cdot x^{j\mu'}$ and arrows equal to zero. That \eqref{eq:morphismemicros} is a quasi-isomorphism is now clear.
\end{proof}

\begin{proof}[Proof of Lemma \ref{lem:formel}: the general case]
Since the statement is local, we can work in the neighbourhood of a point $x_o\in X$, and we can find an embedded resolution $\pi:X'\to X$ of $f^{-1}(0)$ in the neighbourhood of this point, that we still denote by $X$. We can also assume that $\cL=\CC_X$ in this local setting. The statement of Lemma \ref{lem:formel} holds for $\wh\cK_{f'}$ and ${}_{X'}\wh\cK_{f'}$, with $f'=f\circ\pi$, according to the previous computation.

Let us apply $\bR\pi_*$ to the isomorphism \eqref{eq:morphismemicros} for $\cK^\cbbullet_{f'}$. Since $\pi$ is proper, we deduce from the projection formula together with \eqref{eq:modif} that the left-hand term that we get is the left-hand term of \eqref{eq:morphismemicros} for $\pi_+\cO_{X'}$. According to Proposition \ref{prop:imdiru} (with $D=\emptyset$), the right-hand term is isomorphic to $\DR(\wh\cE_X^{-f/u}\otimes\pi_+\cO_{X'})$ (recall that $\dim X=\dim X'$).

Since $\cO_X$ is a direct summand of $\pi_+\cO_{X'}$ by the decomposition theorem of \cite{MSaito86}, it follows that \eqref{eq:morphismemicros} for $\cK^\cbbullet_f$ is an isomorphism.
\end{proof}

\section{Regularity}
We keep the setting of \S\ref{subsec:gaga}. As in \S\ref{sec:andR}, we will work in the analytic topology, and we will not use the exponent $\an$. We denote by $\cO_Y(*D)$ the sheaf of meromorphic functions on $Y$ with pole along~$D$ at most (a divisor that we do not assume with normal crossings at the moment), and we denote by $j:X=Y\moins D\hto Y$ the open inclusion. Let $\cM$ be a coherent $\cO_Y(*D)$-module of rank $d$ equipped with a flat connection $\nabla:\cM\to\Omega_Y^1\otimes_{\cO_Y}\cM$ (we know by \cite{Malgrange95} that~$\cM$ is then locally stably free). Our goal in this section is to prove:

\begin{proposition}\label{prop:comparaison}
Assume that $\nabla$ has regular singularity along $D$. Then, the natural morphism of complexes
\bgroup\numstareq
\begin{equation}\label{eq:comparaison}
\DR(\wh\cE_{(Y,D)}^{-F/u}\otimes\cM)\to \bR j_*\DR(\wh\cE_X^{-f/u}\otimes\cM_{|X})
\end{equation}
\egroup
is a quasi-isomorphism.
\end{proposition}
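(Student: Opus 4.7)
The statement is local on $Y$ and is tautologically a quasi-isomorphism at points of $X$, so I focus on the stalk at a point $y_0\in D$. My plan has three steps: reduction to a normal crossing model, local Koszul description of both sides, and an explicit computation.

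\textbf{Step 1: Reduction to normal crossings with monomial $F$.} By an embedded resolution of singularities applied jointly to $D$ and to the indeterminacy locus of $F$, there is a proper birational morphism $\pi\colon(Y',D')\to(Y,D)$, an isomorphism over $X$, such that $D'$ is a strict normal crossing divisor and $F'=F\circ\pi$ has locally monomial form $F'=v\cdot x^\alpha$ ($v$ a unit, $\alpha\in\ZZ^r$) in coordinates adapted to $D'$. Set $\cM'=\cO_{Y'}(*D')\otimes_{\pi^{-1}\cO_Y(*D)}\pi^{-1}\cM$; it retains regular singularity along $D'$. By Proposition \ref{prop:imdiru}, $\bR\pi_*\DR(\wh\cE_{(Y',D')}^{-F'/\hb}\otimes\cM')\simeq\DR(\wh\cE_{(Y,D)}^{-F/\hb}\otimes\pi_+\cM')$, and $\pi_+\cM'\simeq\cM$ in $D^b(\cD_Y)$ since both are regular holonomic meromorphic extensions of $\cM_{|X}$. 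The right-hand side of \eqref{eq:comparaison} also commutes with $\bR\pi_*$, since $j=\pi\circ j'$. This reduces the problem to the pair $(Y',D',F')$; henceforth I assume $D$ has strict normal crossings and $F$ is locally monomial.

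\textbf{Step 2: Local Koszul description.} Choose coordinates $(x_1,\dots,x_n)$ at $y_0$ with $D=\{x_1\cdots x_r=0\}$ and $F=v\cdot x^\alpha$, and let $\cM^0\subset\cM$ be a Deligne logarithmic extension (locally $\cO_Y$-free, with a logarithmic connection whose residues along each component of $D$ have eigenvalues in a fixed fundamental domain of $\CC/\ZZ$, and $\cO_Y(*D)\otimes_{\cO_Y}\cM^0=\cM$). The stalk at $y_0$ of the left-hand side of \eqref{eq:comparaison} is computed by the Koszul complex on $\cM\lpr\hb\rpr$ with commuting operators
\[
\nabla_{x_i\partial_i}-\alpha_i F/\hb\quad(i\le r),\qquad \nabla_{\partial_i}-\partial_i F/\hb\quad(i>r).
\]
The stalk at $y_0$ of the right-hand side, identified with the derived global sections on a punctured polydisc neighborhood, admits the same Koszul description once the exchange of $\bR j_*$ with $\lpr\hb\rpr$ is justified via the constructibility results of \S\ref{subsec:hrconst} and Proposition \ref{prop:compdirlim}.

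\textbf{Step 3: The computation, and main obstacle.} The comparison of the two Koszul complexes is a calculation in the spirit of Lemma \ref{lem:formel} but with nontrivial coefficients. After decomposing $\cM^0$ into generalized eigenspaces of the residues of $\nabla$ and factoring the Koszul complex along the $r$ coordinates defining $D$, the assertion reduces to a family of one-variable claims: for each $\alpha\in\ZZ$ and $\beta$ in the chosen fundamental domain, the complex
\[
\CC\{x\}[1/x]\lpr\hb\rpr\To{x\partial_x-\alpha x^\alpha/\hb-\beta}\CC\{x\}[1/x]\lpr\hb\rpr
\]
and its analogue on the punctured disc have matching cohomology. The main obstacle is the case $\alpha<0$, where $-\alpha x^\alpha/\hb$ is an \emph{irregular} perturbation of the regular singular operator $x\partial_x-\beta$; its inversion term-by-term in $\hb$ is possible precisely because $\CC\lpr\hb\rpr$ is used, and would fail in the convergent setting. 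The case $\alpha=0$ is the classical Deligne comparison, with invertibility of $x\partial_x-\beta$ ensured by the residue normalization ($\beta\notin\ZZ_{<0}$).
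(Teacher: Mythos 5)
Your Step~1 contains the most serious gap. You claim a single proper modification $\pi$ can make $D'$ normal crossing \emph{and} render $F'$ monomial in coordinates adapted to $D'$, while being an isomorphism over $X$. These requirements conflict: resolving $F^{-1}(0)\cup D$ to normal crossings generally requires centers inside $F^{-1}(0)\smallsetminus D\subset X$, so $\pi$ cannot be an isomorphism over $X$. Once $\pi$ has centers in $X$, the assertion \og $\pi_+\cM'\simeq\cM$ since both are regular holonomic meromorphic extensions of $\cM_{|X}$\fg\ is simply false: $\pi_+\cM'$ acquires extra cohomology supported on the centers, and the abstract \og meromorphic extension\fg\ reasoning does not apply because two extensions of a $\cD$-module need not agree. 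The paper resolves this in two stages. First (Remark \ref{rem:stein}\eqref{rem:stein3}) it resolves only $D$, with centers inside $D$, so the push-forward is harmless. Then, near a point of $F^{-1}(0)\cap D$, it uses a projective modification $\varpi$ to arrange normal crossings of $F^{-1}(0)\cup D$, and here the key tool you are missing is the reduction to $\cM$ \emph{simple} (Remark \ref{rem:stein}\eqref{rem:stein2}) followed by the \emph{decomposition theorem} for simple regular holonomic $\cD$-modules (\cite{Mochizuki07}), which yields $\cM$ as a \emph{direct summand} of $\varpi_+\varpi^+\cM$, not an isomorphism. The proof then proceeds by the direct-summand trick of Lemma \ref{lem:formel}, not by identifying the push-forward with $\cM$.

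Your local model in Steps~2--3 misses the hard case. Writing $F'=v\cdot x^\alpha$ with $\alpha\in\ZZ^r$, where $x_1,\dots,x_r$ cut out $D'$, forces $F'^{-1}(0)\subseteq D'$. But in the actual normal-crossing reduction $F=x^\mu y^\nu$, the coordinates $x_i$ cut out components of $F^{-1}(0)$ that lie \emph{inside} $X$, not in $D$; this is exactly the paper's \og Second case: $m>0$\fg, where the substance of the computation sits (the decomposition \eqref{eq:decomp}, the analysis of $\coker(\hb x_1\nabla^f_{x_1})$, and Lemmas \ref{lem:comparisonlcr}--\ref{lem:constructibilite}). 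Relatedly, you flag the $\alpha<0$ (pole) case as the \og main obstacle\fg, but this is the trivial part of the paper: near $F=\infty$ both complexes in \eqref{eq:comparaison} are acyclic because $y_j\partial_{y_j}+A_{y_j}+\nu_jy^{-\nu}/\hb$ is already invertible (\S\ref{subsec:F=infty}). The genuine difficulty is the opposite situation, $F=c$ finite with $F^{-1}(c)$ transverse to $D$, and there the Koszul complex does \emph{not} split into a family of one-variable problems: the $x^\mu y^\nu/\hb$ term couples all the logarithmic directions, and the paper's change of basis only isolates one arrow ($\hb x_1\nabla^f_{x_1}$), after which the induced cokernel complex must be analyzed via the multi-index decomposition \eqref{eq:decomp} and a filtration argument. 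Finally, invoking Proposition \ref{prop:compdirlim} for the exchange of $\bR j_*$ and $\lpr\hb\rpr$ presupposes constructibility of the complex $K^\cbbullet_X\lcr\hb\rcr$ on $X$; this is precisely Lemma \ref{lem:constructibilite}, which is itself an explicit computation and cannot be waved at.
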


\begin{remarques}\label{rem:stein}\mbox{}
\begin{enumerate}
\item\label{rem:stein1b}
Since $\cM$ is $\cO_Y(*D)$-coherent, we can use  \eqref{eq:compu} to express \eqref{eq:comparaison}, which is equivalent to the isomorphism
\[
(\cM\lpr\hb\rpr,\nabla-\hbm dF)\isom\bR j_*(\cM_{|X}\lpr\hb\rpr,\nabla-\hbm df).
\]

\item\label{rem:stein2}
In an exact sequence $0\to\cM'\to\cM\to\cM''\to0$, if the result holds for $\cM'$ and $\cM''$, it holds for $\cM$. Therefore, we can assume that $\cM$ is a \emph{simple} meromorphic bundle with connection.
\item\label{rem:stein3}
Assume we have a commutative diagram of data as above with $\pi$ proper:
\[
\xymatrix@R=.2cm{
&Y'\ar[dd]^\pi\\X\ar@{_{ (}->}[dr]_-j\ar@{^{ (}->}[ur]^(.6){j'}&\\&Y
}
\]
and let us set $F'=F\circ\pi$. It follows from Proposition \ref{prop:imdiru} that, if the proposition holds for $(Y',j',F')$, it holds for $(Y,j,F)$ by applying $\bR\pi_*$ to both members of \eqref{eq:comparaison}. We can therefore assume that $D$ is a divisor with normal crossings. In such a case, $\cM$ is $\cO_Y(*D)$-locally free and we denote by $d$ its rank.
\end{enumerate}
\end{remarques}

\subsection{Along $F=\infty$}\label{subsec:F=infty}
Let us set $D_\infty=F^{-1}(\infty)$. Near a point $y_o$ of $D_\infty$, we can find a system of local coordinates $(x,y,z)$ of $Y$ such that $F(x,y,z)=y^{-\nu}=\prod_{j=1}^ny_j^{-\nu_j}$, with $\nu_j\geq1$ for $j=1,\dots,n$, and $D=\{y_1\cdots y_n\cdot z_1\cdots z_p=0\}$. If we still denote by $Y$ a small open neighbourhood of $y_o$, we can assume that $(\cM,\nabla)=(\cO_Y(*D)^d,d+A)$, where~$A$ is a matrix of $1$-forms which are logarithmic along $D$. We can moreover assume that $A$ is written as
\begin{equation}\label{eq:matrices}
A=\sum_jA_{y_j}\frac{dy_j}{y_j}+\sum_kA_{z_k}\frac{dz_k}{z_k},
\end{equation}
where $A_{y_j},A_{z_k}$ are pairwise commuting constant matrices whose eigenvalues have their real part in $[0,1)$. We realize the complex $\DR(\wh\cE_{(Y,D)}^{-F/u}\otimes\cM)$ (\resp $\DR(\wh\cE_X^{-f/u}\otimes\nobreak\cM_{|X})$) as the simple complex associated to the $\dim Y$-dimensional cube having vertices equal to $\cO_Y(*D)^d\lpr\hb\rpr$ (\resp $\cO_X^d\lpr\hb\rpr$) and arrows equal to
\begin{align*}
\partial_{x_i}\ (i=1,\dots,m),\
\partial_{y_j}+\frac{A_{y_j}}{y_j}-\nu_j\frac{y^{-\nu-1_j}}\hb\ (j=1,\dots,n),\
\partial_{z_k}+\frac{A_{z_k}}{z_k}\ (k=1,\dots,p).
\end{align*}
One easily checks that $y_j\partial_{y_j}+A_{y_j}+\nu_jy^{-\nu}/\hb$ induces an isomorphism from $\cO_Y(*D)^d\lpr\hb\rpr$ to itself and from $\cO_X^d\lpr\hb\rpr$ to itself. Therefore, both complexes in \eqref{eq:comparaison} are zero near~$y_o$, and Proposition \ref{prop:comparaison} holds at this point.

\subsection{Along $F=c$, first reduction}
We now fix $y_o\in D$ such that $F(y_o)=c\neq\infty$. Since it is harmless for the statement to replace $F$ with $F-c$, we can assume that $c=0$. There exists a projective modification $\varpi:Y'\to Y$ with $Y'$ smooth such that, setting $D'=\varpi^{-1}(D)$ and $F'=F\circ\varpi$, $F^{\prime-1}(0)\cup D'$ is a divisor with normal crossings in $Y'$. Then the locally free $\cO_{Y'}(*D')$-module $\varpi^*\cM$ is naturally equipped with a meromorphic connection with poles along $D'$ and regular singularities. We denote by $\varpi^+\cM$ this bundle with flat connection.

Let us consider the minimal extension (as $\cD$-modules) $j_{!*}\cM$ and $j'_{!*}\varpi^+\cM$. Since~$\cM$ is assumed to be simple (\cf Remark \ref{rem:stein}\eqref{rem:stein2}), $\varpi^+\cM$ is also simple (because simplicity is preserved by restriction to Zariski open sets, and $\cM$ and $\varpi^+\cM$ coincide on $Y\moins(D\cup F^{-1}(0))$). Then $j_{!*}\cM$ and $j'_{!*}\varpi^+\cM$ are simple as $\cD$-modules. From the decomposition theorem for simple $\cD$-modules (\cf \cite{Mochizuki07}), we conclude that $j_{!*}\cM$ is a direct summand of $\varpi_+j'_{!*}\varpi^+\cM$, where $\varpi_+$ denotes the direct image of $\cD$-modules. Tensoring with $\cO_Y(*D)$, we find that $\cM$ is a direct summand of $\varpi_+\varpi^+\cM$, where now~$\varpi_+$ denotes the direct image from the category of $\cD_{Y'}(*D')$-modules to the derived category of $\cD_Y(*D)$-modules. We can now argue as in the last part of the proof of Lemma \ref{lem:formel}, by using Proposition \ref{prop:imdiru}, to conclude that the morphism $\bR\varpi_*\eqref{eq:comparaison}(\varpi^+\cM)$ has the morphism $\eqref{eq:comparaison}(\cM)$ as direct summand, and therefore the latter is an isomorphism as soon as the former is so. This reduces the proof of Proposition \ref{prop:comparaison} to the case where $F^{-1}(0)\cup D$ has normal crossings and $y_o\in F^{-1}(0)\cap D$, which is considered in the next subsection.

\subsection{Along $F=c$, the normal crossing case}
We consider the following local setting: the space $Y$ is a polydisc $\Delta^m\times\Delta^n\times\Delta^p\times\Delta^q$ with set of multi-coordinates $x,y,z,t$. The divisor $D$ is defined as $\bigcup_j\{y_j=0\}\cup\bigcup_k\{z_k=0\}$ and $F$ is the monomial $x^\mu y^\nu=x_1^{\mu_1}\cdots x_m^{\mu_m}\cdot y_1^{\nu_1}\cdots y_n^{\nu_n}$, with $\mu_i,\nu_j\geq1$. The variables $t$ are parameters, and we will soon neglect them.

We assume that $(\cM,\nabla)$ is as in \S\ref{subsec:F=infty}. We can assume that the matrices $A_{y_j},A_{z_k}$ of \eqref{eq:matrices} are in the Jordan normal form, so that $(\cM,\nabla)$ is an extension of rank-one meromorphic bundles with flat connection. It is therefore enough to prove the proposition when $\cM$ has rank one. We will then set $d=1$ and denote by $a_{y_j},a_{z_k}$ the corresponding ``matrices''.

We realize the complex $\DR(\wh\cE_{(Y,D)}^{-F/u}\otimes\cM)$ (\resp $\DR(\wh\cE_X^{-f/u}\otimes\cM_{|X})$) as the simple complex associated to the $\dim Y$-dimensional cube having vertices equal to $\cO_Y(*D)\lpr\hb\rpr$ (\resp $\cO_X\lpr\hb\rpr$) and arrows equal to
\begin{align*}
\nabla^f_{x_i}&\defin\partial_{x_i}+\mu_i\frac{x^{\mu-1_i}y^\nu}\hb\quad\text{for $i=1,\dots,m$},\\
\nabla^f_{y_j}&\defin\partial_{y_j}+\frac{a_{y_j}}{y_j}+\nu_j\frac{x^\mu y^{\nu-1_j}}\hb\quad\text{for $j=1,\dots,n$},\\
\nabla^f_{z_k}&\defin\partial_{z_k}+\frac{a_{z_k}}{z_k}\quad\text{for $k=1,\dots,p$},\\
\nabla^f_{t_\ell}&\defin\partial_{t_\ell}\quad\text{for $\ell=1,\dots,q$}.
\end{align*}
By integrability, the components of the operator $\nabla^f$ pairwise commute. We can easily reduce to the kernel of the components $\nabla^f_{t_\ell}$, and forget them.

\subsubsection*{First case: $m=0$}
In this case, there is no variable $x$. If $n=0$, the result of Proposition \ref{prop:comparaison} is clear, as $dF=0$ and by standard results for regular connections (\cf Lemma \ref{lem:comparaison} below). We will therefore assume that $n\geq1$. In such a case, $\hb\nabla^f_{y_1}$ is bijective both on $\cO_Y(*D)\lpr\hb\rpr$ and on $\cO_X\lpr\hb\rpr$, and therefore both complexes in \eqref{eq:comparaison} are quasi-isomorphic to $0$. Indeed, the coefficient $\psi_k$ of $\hb^k$ in $\psi=\hb\nabla^f_{y_1}(\sum_{\ell\geq\ell_o}\varphi_\ell\hb^\ell)$ is $(\partial_{y_1}+a_1/y_1)\varphi_{k-1}+\nu_1y^{\nu-1_1}\varphi_k$. Assume $\varphi_\ell=0$ for $\ell<\ell_o$ and $\varphi_{\ell_o}\neq0$. Then $\psi_\ell=0$ for $\ell<\ell_o$ and $\psi_{\ell_o}=\nu_1y^{\nu-1_1}\varphi_{\ell_o}$. This implies injectivity, since $\nu_1\neq0$. We get surjectivity by using that $y_j$ are invertible in $\cO_Y(*D)$ or $\cO_X$.

\subsubsection*{Second case: $m>0$}
Let us denote by $D_x$ the divisor defined by $x_1\cdots x_m=0$. We first replace the de~Rham complexes of Proposition \ref{prop:comparaison} with the \emph{logarithmic} de~Rham complexes with respect to $D_x$ (and still meromorphic with respect to $D$ for the left-hand complex).

\begin{proposition}\label{prop:logcomparaison}
With such a setting, Proposition \ref{prop:comparaison} holds with $\Omega_Y^\cbbullet(\log D_x)$ instead of~$\Omega_Y^\cbbullet$, and similarly for $\Omega_X^\cbbullet$.
\end{proposition}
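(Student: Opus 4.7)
The strategy mirrors the normal-crossings computation in Lemma~\ref{lem:formel}: I would realize both sides of the log variant of \eqref{eq:comparaison} as simple complexes of a $(m+n+p+q)$-dimensional cube with pairwise commuting arrows, and reduce through a Koszul change of basis combined with Deligne's comparison for regular logarithmic connections. Multiplying $\nabla^f_{y_j}$ by the invertible $y_j$ and $\nabla^f_{z_k}$ by $z_k$ (harmless on both $\cO_Y(*D)$ and $\cO_X$), the arrows become
\[
A_i=x_i\partial_{x_i}+\mu_iF/\hb,\quad B_j=y_j\partial_{y_j}+a_{y_j}+\nu_jF/\hb,\quad C_k=z_k\partial_{z_k}+a_{z_k},\quad \partial_{t_\ell},
\]
acting on the vertex $\cO_Y(*D)\lpr\hb\rpr$ for the left-hand term and, before applying $\bR j_*$, on $\cO_X\lpr\hb\rpr$ for the right-hand term.

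Using the hypothesis $m\geq1$, I perform the Koszul change of basis: keep $A_1$, and replace $A_i$ ($i\geq2$) by $\tilde A_i:=\nu_1A_i-\mu_iB_1$ and $B_j$ ($j=1,\dots,n$) by $\tilde B_j:=\nu_jA_1-\mu_1B_j$. Explicit computation gives
\[
\tilde A_i=\nu_1x_i\partial_{x_i}-\mu_iy_1\partial_{y_1}-\mu_ia_{y_1},\qquad
\tilde B_j=\nu_jx_1\partial_{x_1}-\mu_1y_j\partial_{y_j}-\mu_1a_{y_j},
\]
so the $F/\hb$ terms cancel outside of $A_1$. The substitution has invertible $\CC$-coefficient matrix (its determinant is a nonzero product of the $\mu_i,\nu_j\geq1$), hence defines an isomorphism of Koszul complexes and preserves the quasi-isomorphism class of the comparison map.

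After the substitution, the sub-cube generated by $\tilde A_i,\tilde B_j,C_k,\partial_{t_\ell}$ is $F/\hb$-free and realises a \emph{regular} logarithmic connection on the vertex modules, whose residues are $\CC$-linear combinations of the original $a_{y_j},a_{z_k}$. By Deligne's comparison theorem \cite{Deligne70}, applied parametrically in $\hb$ (using flatness of $\CC\lpr\hb\rpr$ over $\CC$), the natural morphism from this regular sub-complex on $\cO_Y(*D)\lpr\hb\rpr$ to $\bR j_*$ of its restriction to $X$ is a quasi-isomorphism. Since $A_1=x_1\partial_{x_1}+\mu_1F/\hb$ commutes with the remaining arrows and involves only $x_1\partial_{x_1}$ and multiplication by the globally holomorphic function $F$---neither of which distinguishes $\cO_Y(*D)\lpr\hb\rpr$ from $\bR j_*\cO_X\lpr\hb\rpr$---forming the Koszul cone with $A_1$ on both sides preserves the quasi-isomorphism and yields the proposition.

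The main technical hurdle is justifying Deligne's comparison when the ``regular'' arrows $\tilde A_i,\tilde B_j$ mix the $x$- and $y$-coordinates rather than acting in decoupled directions. I would resolve this by decomposing the vertex module along its bigrading by monomials in $(x,y)$: on each graded piece, the Euler-type operators $\tilde A_i,\tilde B_j$ act as multiplication by an affine function of the multi-degree together with the constants $a_{y_j}$, so the comparison reduces either to scalar invertibility (when that function is nonzero) or to a direct one-dimensional calculation (when it vanishes), both of which give matching results on the two sides.
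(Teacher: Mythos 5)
Your change-of-basis idea is sound and is essentially the one the paper uses, except that the paper subtracts only multiples of $A_1=x_1\nabla^f_{x_1}$ (giving $x_i\partial_{x_i}-(\mu_i/\mu_1)x_1\partial_{x_1}$ and $y_j\partial_{y_j}+a_{y_j}-(\nu_j/\mu_1)x_1\partial_{x_1}$), whereas you also mix in $B_1$. Your version fails when $n=0$ (no $y$-coordinates), so it would need to be replaced by the triangular substitution in $A_1$ alone. The cone-over-$A_1$ reduction is fine as a formal move. But there are two genuine gaps beyond this.

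First, you invoke ``Deligne's comparison theorem'' for the $F/\hb$-free sub-cube, but that sub-cube is a Koszul complex whose arrows are $\CC$-linear combinations of several Euler operators; it is not the de~Rham complex of a regular logarithmic connection, so Deligne's theorem does not apply as stated. You acknowledge this and propose a ``bigrading by monomials in $(x,y)$'', but the vertex modules are convergent (indeed $\hb$-completed) power series, which do not decompose as a direct sum over monomial bidegrees; an honest argument must first pass to the cokernel of $\hb x_1\nabla^f_{x_1}$ and decompose it as in \eqref{eq:decomp}, obtaining a finite filtration whose graded pieces carry \emph{genuine} logarithmic connections to which Lemma~\ref{lem:comparaison} applies. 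That decomposition, and the bookkeeping of the induced filtration, is the technical heart of Lemma~\ref{lem:comparisonlcr} in the paper and is not reproduced or replaced by the bigrading sketch.

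Second, the phrase ``applied parametrically in $\hb$, using flatness of $\CC\lpr\hb\rpr$ over $\CC$'' hides a real problem: $\bR j_*$ does not commute with the inductive limit $\CC\lpr\hb\rpr\otimes_{\CC\lcr\hb\rcr}(-)$ in general, and flatness (over $\CC$ or even over $\CC\lcr\hb\rcr$) does not resolve this. The paper proves the comparison first with $\lcr\hb\rcr$ coefficients (where $\lcr\hb\rcr$ behaves well under $\bR j_*$ by Mittag-Leffler arguments), and then passes to $\lpr\hb\rpr$ via Lemma~\ref{lem:Rjstar}, which in turn relies on Proposition~\ref{prop:compdirlim} and the constructibility statement of Lemma~\ref{lem:constructibilite}. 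Your proposal needs an equivalent mechanism; without it, the claimed quasi-isomorphism after applying $\bR j_*$ is unjustified.
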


\begin{proof}
We can now work with the Koszul complexes associated to the $\dim Y$-dimensional cube having vertices equal to $\cO_Y(*D)\lpr\hb\rpr$ (\resp $\cO_X\lpr\hb\rpr$) and differentials $x_i\nabla^f_{x_i}$, $\nabla^f_{y_j},\nabla^f_{z_k}$ (or equivalently $y_j\nabla^f_{y_j},z_k\nabla^f_{z_k}$), and $\nabla^f_{t_\ell}$. Moreover, by changing the basis of the vector space underlying the Koszul complex, it is equivalent to consider the differentials $x_1\nabla^f_{x_1}$, $x_i\partial_{x_i}-(\mu_i/\mu_1)x_1\partial_{x_1}$ ($i=2,\dots,m$), $y_j\partial_{y_j}+a_{y_j}-(\nu_j/\mu_1)x_1\partial_{x_1}$ ($j=1,\dots,n$), $z_k\partial_{z_k}+a_k$ ($k=1,\dots,p$), $\partial_{t_\ell}$ ($\ell=1,\dots,q$). Lastly, since $\hb$ is invertible on the terms of this complex, we will work with $\hb x_1\nabla^f_{x_1}$.

We denote by $K^\cbbullet_Y(*D)\lcr\hb\rcr$ (\resp $K^\cbbullet_X\lcr\hb\rcr$ the complex associated to the cube with vertices $\cO_Y(*D)\lcr\hb\rcr$ (\resp $\cO_X\lcr\hb\rcr$) and arrows given by the previous formulas.

\begin{lemme}\label{lem:comparisonlcr}
The natural morphism $K^\cbbullet_Y(*D)\lcr\hb\rcr\to\bR j_*K^\cbbullet_X\lcr\hb\rcr$ is an isomorphism.
\end{lemme}

\begin{proof}
Let us consider the germ of this morphism at the origin. Let $U_\epsilon$ be the family of polydiscs of poly-radius $\epsilon$ in the previous coordinate system. We notice that $R^k\Gamma(U_\epsilon\moins\nobreak D,K^\ell_X\lcr\hb\rcr)=0$ for each $\ell$ and each $k>0$, since $U_\epsilon\moins D$ is Stein, according to the remarks in \S\ref{subsec:prelim}. Therefore,
\begin{multline}\label{eq:limU}
\big[K^\cbbullet_Y(*D)\lcr\hb\rcr_0\ra\bR j_*K^\cbbullet_X\lcr\hb\rcr_0\big]\\
=\varinjlim_{\epsilon>0}\big[\Gamma\big(U_\epsilon,K^\cbbullet_Y(*D)\lcr\hb\rcr\big)\ra\Gamma\big(U_\epsilon\moins D,K^\cbbullet_X\lcr\hb\rcr]\big)\big].
\end{multline}
We now set $U=U_\epsilon$. Then $\Gamma\big(U,K^\cbbullet_Y(*D)\lcr\hb\rcr\big)$ (\resp $\Gamma\big(U\moins\nobreak D,K^\cbbullet_X\lcr\hb\rcr]\big)$) is the simple complex associated to the cube with vertices $\cO(U)(*D)\lcr\hb\rcr$ (\resp $\cO(U\moins\nobreak D)\lcr\hb\rcr$) and arrows $\hb x_1\nabla^f_{x_1}$, $x_i\partial_{x_i}-(\mu_i/\mu_1)x_1\partial_{x_1}$ ($i=2,\dots,m$), $y_j\partial_{y_j}+a_{y_j}-(\nu_j/\mu_1)x_1\partial_{x_1}$ ($j=1,\dots,n$), $z_k\partial_{z_k}+a_k$ ($k=1,\dots,p$), $\partial_{t_\ell}$ ($\ell=1,\dots,q$). Moreover, it is not difficult to reduce to the kernel of the $\partial_{t_\ell}$, so we will simply forget these coordinates in order to simplify the notation.

We will use the following notation. For $I\subset\{1,2,\dots,m\}$, $U_{\wh I}$ denotes the product of the discs $\Delta_\epsilon$ except the discs with coordinates $x_i$, $i\in I$. For such a subset $I$, we denote by $[1,\mu_I]$ the product $\prod_{i\in I}\{1,\dots,\mu_i\}$. We set $x^{\prime\mu'}=x_2^{\mu_2}\cdots x_m^{\mu_m}$. We decompose $\cO(U)(*D)$ in the following way:
\begin{multline}\label{eq:decomp}
\cO(U)(*D)\lcr\hb\rcr=\cO(U)(*D)\lcr\hb\rcr x^\mu\\
\oplus\Big(\bigoplus_{\ell_1=0}^{\mu_1-1}\cO(U_{\wh 1})(*D)\lcr\hb\rcr x_1^{\ell_1}\Big)\oplus\Big(\bigoplus_{\emptyset\neq I\subset\{2,\dots,m\}}\bigoplus_{\ell_I\in[1,\mu_I]}\cO(U_{\wh I})(*D)\lcr\hb\rcr x^{\mu-\ell_I}\Big),
\end{multline}
and we have a similar decomposition for $\cO(U\moins D)\lcr\hb\rcr$. Let us check that $\hb x_1\nabla^f_{x_1}$ is injective: the coefficient $\psi_k\in\cO(U)(*D)$ of $\hb^k$ in $\psi=\hb x_1\nabla^f_{x_1}(\sum_{\ell\geq\ell_o}\varphi_\ell\hb^\ell)$ is $x_1\partial_{x_1}\varphi_{k-1}+\mu_1x^\mu y^\nu\varphi_k$, hence the assertion is clear (and similarly for $\cO(U\moins\nobreak D)\lcr\hb\rcr$). One can then replace each complex (up to a shift by one) by the corresponding complex for which the terms are the $\coker\Gamma(U,\hb x_1\nabla^f_{x_1})$ and the arrows are those induced by the previous ones. Notice that they are now independent of $\hb$. In the following, we will use the notation $\hb x_1\nabla^f_{x_1}$ instead of  $\Gamma(U,\hb x_1\nabla^f_{x_1})$.

On the other hand, let $\psi\in\cO(U)(*D)\lcr\hb\rcr$ be written as $\sum_{\ell\geq\ell_o}\psi_\ell\hb^\ell$. If $\psi$ is in the image of $\hb x_1\nabla^f_{x_1}$, then $\psi_{\ell_o}\in x^\mu\cO(U)(*D)$ (\resp $x^\mu \Gamma(U\moins D,\cO_X)$), and arguing iteratively with respect to $\ell$, one finds that any $\eta\in\cO(U)(*D)\lcr\hb\rcr)$ (\resp...) is equivalent, modulo the image of $\hb x_1\nabla^f_{x_1}$ to a unique element in the second line of \eqref{eq:decomp} (\resp...). We also note that this identification of $\coker\hb x_1\nabla^f_{x_1}$ with the second line of \eqref{eq:decomp} is $\cO_{U_{\wh 1}}(*D)\lcr\hb\rcr$-linear.

We will describe the differential structure of $\coker\hb x_1\nabla^f_{x_1}$ when using this representation. We will denote by $x_i\wt\nabla^f_{x_i}$, etc., the differentials induced on this cokernel complex. Let us start with the first term. The action of $x_i\wt\nabla^f_{x_i}$ ($i\geq2$) on $\cO(U_{\wh 1})(*D)\lcr\hb\rcr x_1^{\ell_1}$ corresponds to the action of $x_i\partial_{x_i}-\ell_1\mu_i/\mu_1\id$ on $\cO(U_{\wh 1})(*D)\lcr\hb\rcr$, that of $y_j\wt\nabla^f_{y_j}$ to that of $y_j\partial_{y_j}+a_{y_j}-\ell_1\nu_j/\mu_1\id$, and that of $z_k\wt\nabla^f_{z_k}$ to that of $z_k\partial_{z_k}+a_{z_k}$. Therefore, $\wt\nabla^f$ induces on it a $(D_{x'}\cup D)$-logarithmic connection, and we can apply Lemma \ref{lem:comparaison} below to it (replacing $U$ with $U_{\wh1}$ and working on each summand) to conclude the corresponding logarithmic comparison result for it.

We now consider the second term in the second line of \eqref{eq:decomp}. The terms of this direct sum are not themselves $\cO(U_{\wh1})(*D)$-submodules. However, the increasing filtration $F_p$ indexed by $p=\#I$ consists of $\cO(U_{\wh1})(*D)$-submodules, which are stable by the action of the induced operators $x_i\wt\nabla^f_{x_i}$ ($i\geq2$), $y_j\wt\nabla^f_{y_j}$ and $z_k\wt\nabla^f_{z_k}$. Indeed, the differentiable structure is given by the following formulas ($i\geq2$, $\emptyset\neq I\subset\{2,\dots,m\}$):
\begin{align*}
x_i\wt\nabla^f_{x_i}\Big(g(x_{\wh I},y,z)x^{\mu-\ell_I}\Big)&=
\begin{cases}
-\Big(\ell_ig+\dfrac{\mu_i}{\mu_1}x_1\dfrac{\partial g}{\partial x_1}\Big)x^{\mu-\ell_I}&\text{if $i\in I$},\\
\Big(x_i\partial_{x_i}(g)-\dfrac{\mu_i}{\mu_1}x_1\dfrac{\partial g}{\partial x_1}\Big)x^{\mu-\ell_I}&\text{if $i\notin I$},
\end{cases}\\
y_j\wt\nabla^f_{y_j}\Big(g(x_{\wh I},y,z)x^{\mu-\ell_I}\Big)&=
\Big(y_j\partial_{y_j}(g)+a_{y_j}g-\dfrac{\nu_j}{\mu_1}x_1\dfrac{\partial g}{\partial x_1}\Big)x^{\mu-\ell_I},\\
z_k\wt\nabla^f_{z_k}\Big(g(x_{\wh I},y,z)x^{\mu-\ell_I}\Big)&=
\Big(z_k\partial_{z_k}(g)+a_{z_k}g\Big)x^{\mu-\ell_I}.
\end{align*}

As a $\cO(U_{\wh1})(*D)$-module, each graded module $\gr_p^F$ is the direct sum over the subsets~$I$ with $\#I=p$ of the $\cO(U_{\wh{1,I}})(*D)$-modules $\bigoplus_{\ell_I\in[1,\mu_I]}\cO(U_{\wh I})(*D)\lcr\hb\rcr x^{\mu-\ell_I}$ (they have infinite rank as such), each of which equipped with a logarithmic $\cO(U_{\wh{1,I}})$-connection given by the formulas above, with $i\notin I$, and moreover equipped with $\cO(U_{\wh{1,I}})$-linear endomorphisms $x_i\wt\nabla^f_{x_i}$ ($i\in I$). It is enough to prove the comparison result of Lemma~\ref{lem:comparisonlcr} for each pair of graded complexes $\gr_p^F$. We therefore fix $I$ with $\#I\geq1$, and $\ell_I\in[1,\mu_I]$.

Let us fix $i_o\in I$. We define a logarithmic connection $\ov\nabla$ on $\cO(U_{\wh I})(*D)$ by setting
\begin{align*}
x_i\ov\nabla_{x_i}&=x_i\partial_{x_i}+\ell_{i_o}\frac{\mu_i}{\mu_{i_o}}\quad(i\geq1,\;i\notin I)\\
y_j\ov\nabla_{y_j}&=y_j\partial_{y_j}+a_{y_j}+\ell_{i_o}\frac{\nu_j}{\mu_{i_o}},\\
z_k\ov\nabla_{z_k}&=z_k\partial_{z_k}+a_{z_k}.
\end{align*}
Then the $(m-1)+n+p$-cube that we are considering has vertices equal to $\cO(U_{\wh I})(*D)\lcr\hb\rcr$ (\resp ...) and arrows (indexed by $i\neq1,j,k$) given by
\begin{align*}
&x_1\ov\nabla_{x_1}+(\ell_i-\ell_{i_o})\frac{\mu_i}{\mu_{i_o}}\quad\text{if }i\in I,\\
&x_i\ov\nabla_{x_i}-\frac{\mu_i}{\mu_1}x_1\ov\nabla_{x_1}\quad\text{if }i\geq2,\;i\notin I,\\
&y_j\ov\nabla_{y_j}-\frac{\nu_j}{\mu_1}x_1\ov\nabla_{x_1},\\
&z_k\ov\nabla_{z_k}.
\end{align*}
or equivalently, by a triangular change of basis,
\[
x_1\ov\nabla_{x_1}\ (i=i_o),\quad (\ell_i-\ell_{i_o})\frac{\mu_i}{\mu_{i_o}}\ (i\in I\moins\{i_o\}),\quad x_i\ov\nabla_{x_i}\ (i\geq2,\;i\notin I),\quad y_j\ov\nabla_{y_j},\quad z_k\ov\nabla_{z_k}.
\]
If $\ell_i\neq\ell_{i_o}$ for some $i\in I\moins\{i_o\}$, then the corresponding complexes are both quasi-isomorphic to $0$. Otherwise, we are reduced to proving the comparison for the complexes corresponding to the cubes having vertices $\cO(U_{\wh I})(*D)\lcr\hb\rcr$ (\resp ...) and arrows
\[
x_1\ov\nabla_{x_1}\ (i=i_o),\quad x_i\ov\nabla_{x_i}\ (i\geq2,\;i\notin I),\quad y_j\ov\nabla_{y_j},\quad z_k\ov\nabla_{z_k}.
\]
We can therefore apply Lemma \ref{lem:comparaison}\eqref{lem:comparaison1} below.
\end{proof}

\begin{lemme}\label{lem:comparaison}
Let $\nabla=d+b$ be a flat $(D_x\cup D)$-logarithmic connection on $\cO_Y$, where
\[
b=\sum_ib_{x_i}\frac{dx_i}{x_i}+\sum_jb_{y_j}\frac{dy_j}{y_j}+\sum_kb_{z_k}\frac{dz_k}{z_k},\qquad b_{x_i},b_{y_j},b_{z_k}\in\CC.
\]
Then,
\begin{enumerate}
\item\label{lem:comparaison1}
for $U$ as above, the natural inclusion of complexes
\[
\big([\cO(U)(*D)\otimes\Omega(U)^\cbbullet(\log D_x)]\lcr\hb\rcr,\nabla\big)\to\big(\Omega(U\moins D)^\cbbullet(\log D_x)\lcr\hb\rcr,\nabla\big)
\]
is a quasi-isomorphism;
\item\label{lem:comparaison2}
the natural morphism of complexes
\[
\big([\cO_Y(*D)\otimes\Omega_Y^\cbbullet(\log D_x)]\lcr\hb\rcr,\nabla\big)\to\bR j_*\big(\Omega_X^\cbbullet(\log D_x)\lcr\hb\rcr,\nabla\big)
\]
is a quasi-isomorphism;
\item\label{lem:comparaison3}
the natural morphism of complexes
\[
\big([\cO_Y(*D)\otimes\Omega_Y^\cbbullet(\log D_x)]\lpr\hb\rpr,\nabla\big)\to\bR j_*\big(\Omega_X^\cbbullet(\log D_x)\lpr\hb\rpr,\nabla\big)
\]
is a quasi-isomorphism.
\end{enumerate}
\end{lemme}

\begin{proof}
We first prove \eqref{lem:comparaison1} and \eqref{lem:comparaison2} without $\lcr\hb\rcr$. Let us start with \eqref{lem:comparaison1}. We realize both complexes as the simple complexes associated to the $(m+n+p)$-dimensional cubes with vertices $\cO(U)(*D)$ (\resp $\cO(U\moins D)$) and arrows $x_i\partial_{x_i}+b_{x_i}$, $y_j\partial_{y_j}+b_{y_j}$ and $z_k\partial_{z_k}+b_{z_k}$. If one of the $b_{y_j},b_{z_k}$ is not an integer or if one of the $b_{x_i}$ is not a nonpositive integer, then each complex is quasi-isomorphic to zero. Otherwise, one easily argues by induction on $\dim Y$. The proof of \eqref{lem:comparaison2}  is similar.

Now, \eqref{lem:comparaison1} follows from the comparison for the coefficients of each $u^k$. For \eqref{lem:comparaison2}, we argue as for \eqref{eq:limU} by using the property that $H^k(U\moins D,\Omega_X^\ell(\log D_x)\lcr\hb\rcr)=0$ for $U$ Stein and $k>0$, as already recalled in \S\ref{subsec:prelim}.

Finally, \eqref{lem:comparaison3} reduces to the commutation of $\bR j_*$ and $\CC\lpr\hb\rpr\otimes_{\CC\lcr\hb\rcr}$. Let us notice that, for each $k$, the natural morphism
\[
\cH^k\big(\Omega_X^\cbbullet(\log D_x)\lcr\hb\rcr,\nabla\big)\to\cH^k\big(\Omega_X^\cbbullet(\log D_x),\nabla\big)\lcr\hb\rcr
\]
is an isomorphism, by applying \cite[Th\ptbl4.5]{Hartshorne75} to a fundamental system of Stein open neighbourhoods of each point of $X$. It follows that $\big(\Omega_X^\cbbullet(\log D_x)\lcr\hb\rcr,\nabla\big)$ has locally constant cohomology on each stratum of the natural stratification of $(X,D_x)$. Since this stratification is the restriction to $X$ of a Whitney stratification of $(Y,X)$, we get~\eqref{lem:comparaison3} by applying Proposition \ref{prop:compdirlim}.
\end{proof}

\subsubsection*{End of the proof of Proposition \ref{prop:logcomparaison}}
From Lemma \ref{lem:comparisonlcr} we conclude that the natural morphism
\[
\CC\lpr\hb\rpr\otimes_{\CC\lcr\hb\rcr}K^\cbbullet_Y(*D)\lcr\hb\rcr=K^\cbbullet_Y(*D)\lpr\hb\rpr\to\CC\lpr\hb\rpr\otimes_{\CC\lcr\hb\rcr}\bR j_*(K^\cbbullet_X\lcr\hb\rcr)
\]
is a quasi-isomorphism. Proposition \ref{prop:logcomparaison} now follows from Lemma \ref{lem:Rjstar} below.
\end{proof}

\begin{lemme}\label{lem:Rjstar}
The natural morphism
\[
\CC\lpr\hb\rpr\otimes_{\CC\lcr\hb\rcr}\bR j_*(K^\cbbullet_X\lcr\hb\rcr)\to\bR j_*(K^\cbbullet_X\lpr\hb\rpr)=\bR j_*\big(\CC\lpr\hb\rpr\otimes_{\CC\lcr\hb\rcr}K^\cbbullet_X\big)
\]
is a quasi-isomorphism.
\end{lemme}

\begin{proof}
We will use Proposition \ref{prop:compdirlim} with respect to the natural stratification of $D\cup D_x$. We first replace $K^\cbbullet_X\lcr\hb\rcr$ with $\coker \hb x_1\nabla^f_{x_1}$ as above, equipped with the induced differentials $x_i\wt\nabla^f_{x_i}$, etc., and whose terms are $\cO_{X_{\wh 1}}\lcr\hb\rcr$-modules (the notation is similar to that used in the proof of Lemma \ref{lem:comparisonlcr}): indeed, it is easy to check, as we already did above, that $\hb x_1\nabla^f_{x_1}$ is injective on each term of the complex $K^\cbbullet_X\lcr\hb\rcr$. According to Proposition \ref{prop:compdirlim}, the lemma now follows from Lemma \ref{lem:constructibilite} below.
\end{proof}

\begin{lemme}\label{lem:constructibilite}
There exists a finite filtration $F^\cbbullet$ of the complex $\coker \hb x_1\nabla^f_{x_1}$ such that each graded complex $\gr^p_F\coker \hb x_1\nabla^f_{x_1}$ has locally constant cohomology on each stratum $Y_\alpha$ contained in $X$.
\end{lemme}

\begin{proof}
We consider the increasing sequence of ideals:
\[
\cI^{(0)}=0\subset\cI^{(1)}\subset\cdots\subset\cI^{(m-1)}\subset\cI^{(m)}=\cO_{X_{\wh 1}}\lcr\hb\rcr,
\]
with
\[
\cI^{(p)}=\sum_{\substack{I\subset\{2,\dots,m\}\\ \#I=m-p}}\cO_{X_{\wh 1}}\lcr\hb\rcr\cdot x^I,\qquad(p\geq1)
\]
where we set $x^I=\prod_{i\in I}x_i$. Given a sheaf $\cF$ of $\cO_{X_{\wh 1}}\lcr\hb\rcr$-modules, we denote by $F^p\cF$ the subsheaf of local sections annihilated by some power of $\cI^{(p)}$. This forms a decreasing filtration of $\cF$. Due to the logarithmic form of the differentials, we obtain a filtration $F^\cbbullet\coker \hb x_1\nabla^f_{x_1}$ of the complex $\coker \hb x_1\nabla^f_{x_1}$.

We will show the assertion by an explicit local computation of the graded complex on $X$. As above, we will neglect the coordinates $t_\ell$. Let $I^o\subset\{1,\dots,m\}$ and let us consider the stratum $X_{\wh{I^o}}$ defined by $x_i=0\iff i\in I^o$.

Let us first notice that, if $I^o=\emptyset$, then $\hb x_1\nabla^f_{x_1}$ is onto, hence the complex $\coker\hb x_1\nabla^f_{x_1}$ is zero and there is nothing to prove. Hence we will assume that $I^o\neq\emptyset$.

If $1\in I^o$, we set $I^{\prime o}=I^o\moins\{1\}$ and we decompose $\cO_X\lcr\hb\rcr_{|X_{\wh I^o}}$ in a way similar to \eqref{eq:decomp}: for each open polydisc centered at a point $(x^o,y^o,z^o)$ in $X_{\wh I^o}$, let $V$ denote a polydisc in $X$ centered at $(x^o,y^o,z^o)$, which is small enough so that $x_i\neq0$ all over~$V$ for each $i\notin I^o$; then we decompose $\Gamma(V,\cO_X\lcr\hb\rcr])$ as in \eqref{eq:decomp}, by replacing~$U$ with~$V$ and we impose $I\subset I^{\prime o}$. Since $H^1(V,\cO_X\lcr\hb\rcr)=0$, we have termwise $\Gamma(V,\coker \hb x_1\nabla^f_{x_1})=\coker\Gamma(V,\hb x_1\nabla^f_{x_1})$.

The argument is then analogous to that of the proof of Lemma \ref{lem:comparisonlcr}: the complex $\Gamma(V,\coker \hb x_1\nabla^f_{x_1})$ is identified with the Koszul complex associated to the cube whose vertices are given by the (modified) second line of \eqref{eq:decomp}. The filtration $F^\cbbullet$ is identified with that given by $\#I$, and the graded complexes are shown to be locally constant by 
using Lemma \ref{lem:comparaison}.

If $1\notin I^o$, the argument is similar, except that the first term of the second line in \eqref{eq:decomp} does not show up.
\end{proof}

\begin{proof}[End of the proof of Proposition \ref{prop:comparaison} in the normal crossing case]
We consider the partial weight filtration $W_\bbullet\Omega_Y^\cbbullet(*D)(\log D_x)$ with respect to the divisor $D_x$. Proposition \ref{prop:comparaison} concerns $W_0$, while Proposition \ref{prop:logcomparaison} gives the result for~$W_m$. We argue as in the proof of Lemma \ref{lem:formel}. It remains to show the result on $\gr_\ell^W$ for each $\ell\geq1$. One notices then that $df\hbm$ induces zero on such a quotient, and Proposition \ref{prop:comparaison} for $\gr_\ell^W$ reduces to the comparison result of Deligne \cite{Deligne70} as in Lemma \ref{lem:comparaison}\eqref{lem:comparaison3}.
\end{proof}

\backmatter
\def\ieme{rd\xspace}
\newcommand{\SortNoop}[1]{}\def\cprime{$'$}
\providecommand{\bysame}{\leavevmode ---\ }
\providecommand{\og}{``}
\providecommand{\fg}{''}
\providecommand{\smfandname}{\&}
\providecommand{\smfedsname}{\'eds.}
\providecommand{\smfedname}{\'ed.}
\providecommand{\smfmastersthesisname}{M\'emoire}
\providecommand{\smfphdthesisname}{Th\`ese}

\end{document}